\newtheorem{thm}{Theorem}[section]
\newtheorem{cor}[thm]{Corollary}
\newtheorem{lem}[thm]{Lemma}
\newtheorem{exm}[thm]{Example}
\newtheorem{prop}[thm]{Proposition}
\theoremstyle{definition}
\theoremstyle{remark}
\newtheorem{rem}[thm]{\bf Remark}
\numberwithin{equation}{section}
\begin{document}
\title[Frobenius functors and Gorenstein homological properties]{Frobenius functors and Gorenstein homological properties}
\author[Xiao-Wu Chen, Wei Ren] {Xiao-Wu Chen, Wei Ren}

\thanks{}
\thanks{}
\subjclass[2010]{18G25, 18G20, 18G80, 16E65}
\date{\today}

\thanks{E-mail: xwchen$\symbol{64}$mail.ustc.edu.cn; wren$\symbol{64}$cqnu.edu.cn}
\keywords{Frobenius functor,  Gorenstein projective object, Gorenstein global dimension}%

\maketitle

\dedicatory{}%
\commby{}%

\begin{abstract}
We prove that  any faithful Frobenius functor between abelian categories preserves the Gorenstein projective dimension of objects. Consequently, it preserves and reflects Gorenstein projective objects. We give conditions on when a Frobenius functor preserves the stable categories of Gorenstein projective objects, the singularity categories and the Gorenstein defect categories, respectively. In the appendix, we give a direct proof of the following known result: for an abelian category with enough projectives and injectives, its global  Gorenstein projective dimension coincides with its global  Gorenstein injective dimension.
\end{abstract}

\section{Introduction}

Frobenius extensions between rings are classical in algebra, which extend the notion of a Frobenius algebra over a field. For example, the group algebra $kG$ of a finite group $G$ over a field $k$ is a Frobenius algebra, and the natural embedding  of an arbitrary ring $R$ into the group ring $RG$ is  a Frobenius extension.

Frobenius bimodules are coordinate-free generalizations of Frobenius extensions   \cite{Kas, NT60}. We mention that Frobenius bimodules appear naturally in stable equivalences of Morita type \cite{DM07, Xi08}.

Frobenius bimodules correspond bijectively to Frobenius functors between module categories \cite{Mor65, CGN99,CDM02}. Therefore, Frobenius functors are viewed as a categorical analogue to Frobenius bimodules. The following example, although not so well known, seems to be fundamental in the homological study of  complexes: the forgetful functor from the category of cochain complexes of modules to the category of graded modules is a Frobenius functor; consult \cite[Subsection~2.2]{Kel} and Example~\ref{exm:dg}.

Gorenstein projective modules are central in relative homological algebra \cite{EJ00, Hol04}. The following fact seems to be well known: for a finite group $G$, a module over $RG$ is Gorenstein projective if and only if its underlying $R$-module is Gorenstein projective; compare \cite[Subsection 8.2]{Buc} and \cite{Chen11}.  This motivates  the work \cite{Ren1, Ren2, Zhao}, where it is proved that Frobenius extensions between rings preserve and reflect Gorenstein projective modules; compare \cite{HLGZ}. Similarly, the forgetful functor from the category of cochain complexes to the category of graded modules preserves and reflects Gorenstein projective objects \cite{Yang11, YL11, Ren1}. This similarity motivates the following natural question: does  any Frobenius functor between two abelian categories preserve and reflect Gorenstein projective objects?

The first result  gives an affirmative answer to this question, and thus unifies the above mentioned results, provided that  the Frobenius functor is faithful; see Theorem~\ref{thm:GP}. As an advantage of our categorical consideration, a similar result for Gorenstein injective objects follows by categorical duality. Indeed, our result is slight stronger, as it is shown that a faithful Frobenius functor preserves the Gorenstein projective dimension of objects.

For any ring $R$, there are three triangulated categories measuring its homological singularity, namely, the stable category of Gorenstein projective $R$-modules, the singularity category \cite{Buc, Or04} and the Gorenstein defect category \cite{BJO}. In a similar manner, these triangulated categories are defined for an abelian category with enough projective objects. It is known that under mild conditions, a stable equivalence of Morita type preserves these triangulated categories \cite{LX, ZZ}. As mentioned above, a stable equivalence of Morita type gives rise to a Frobenius functor between module categories. Therefore, it is very natural to expect that a certain Frobenius functor preserves these three triangulated categories. We confirm this expectation in the second result; see Proposition~\ref{prop:tri-equ}.

The structure of this paper is straightforward. In the appendix, we give a direct proof of the following known result \cite{BR}: for an abelian category with enough projective objects and enough injective objects,  the supermum of the Gorenstein projective dimension of all objects coincides with the supermum of the Gorenstein injective dimension of all objects. We mention that the original proof in \cite[Chapter~VII]{BR} seems to be very indirect and  nontrivial to follow.

\section{Adjoint pairs and Frobenius pairs}

In this section, we recall standard facts and examples on Frobenius functors.

Throughout, we assume that both $\mathcal{A}$ and $\mathcal{B}$ are abelian categories with enough projective objects. Denote by $\mathcal{P}(\mathcal{A})$ and $\mathcal{P}(\mathcal{B})$ the full subcategories of projective objects in $\mathcal{A}$ and $\mathcal{B}$, respectively.

Let $F\colon \mathcal{A}\rightarrow \mathcal{B}$ be an additive functor with a right adjoint $G\colon \mathcal{B}\rightarrow \mathcal{A}$. We will denote  the adjoint pair $(F, G)$ by $F\colon \mathcal{A}\rightleftarrows \mathcal{B}\colon G$. We denote the unit by $\eta\colon {\rm Id}_\mathcal{A}\rightarrow GF$, and the counit by $\varepsilon\colon FG\rightarrow {\rm Id}_\mathcal{B}$.

The following results are standard. We include a complete proof for the convenience of the reader. For a class $\mathcal{S}$  of objects in $\mathcal{A}$, we denote by ${\rm add}\; \mathcal{S}$ the full subcategory consisting of direct summands of finite direct sums of objects in $\mathcal{S}$.

\begin{lem}\label{lem:adj}
Let $F\colon \mathcal{A}\rightleftarrows \mathcal{B}\colon G$ be an adjoint pair. Then the following statements hold.
\begin{enumerate}
\item $F(\mathcal{P}(\mathcal{A}))\subseteq \mathcal{P}(\mathcal{B})$ if and only if the functor $G$ is exact;
\item ${\rm add}\; F(\mathcal{P}(\mathcal{A}))=\mathcal{P}(\mathcal{B})$ if and only if the functor $G$ is exact and faithful;
    \item  Assume that $F$ is exact. Then $F$ is faithful if and only if the unit $\eta\colon {\rm Id}_\mathcal{A}\rightarrow GF$ is mono.
\end{enumerate}
\end{lem}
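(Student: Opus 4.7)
The plan is to exploit the hom-adjunction $\mathrm{Hom}_\mathcal{B}(F(A),B)\cong \mathrm{Hom}_\mathcal{A}(A,G(B))$ together with the two triangle identities $\varepsilon_{F(X)}\circ F(\eta_X)=\mathrm{id}_{F(X)}$ and $G(\varepsilon_Y)\circ\eta_{G(Y)}=\mathrm{id}_{G(Y)}$; combined with the standing assumption that both $\mathcal{A}$ and $\mathcal{B}$ have enough projectives, these are the only serious inputs.

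For (1), the adjunction gives a natural isomorphism $\mathrm{Hom}_\mathcal{B}(F(P),-)\cong \mathrm{Hom}_\mathcal{A}(P,G(-))$ for each $P\in\mathcal{P}(\mathcal{A})$. If $G$ is exact, the right-hand side is exact and $F(P)$ is projective. Conversely, given a short exact sequence $0\to B''\to B\to B'\to 0$, the right adjoint $G$ is automatically left exact, so it suffices to show $G(B)\to G(B')$ is epi; letting $C$ be its cokernel, applying $\mathrm{Hom}_\mathcal{A}(P,-)$ and using the projectivity of $F(P)$ (equivalently, the exactness of the left-hand side of the hom-isomorphism) yields $\mathrm{Hom}_\mathcal{A}(P,C)=0$ for every $P\in\mathcal{P}(\mathcal{A})$, and enough projectives in $\mathcal{A}$ then force $C=0$.

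For the forward direction of (2), part (1) already gives exactness of $G$. To see $G$ is faithful, suppose $\varphi\colon B\to B'$ satisfies $G(\varphi)=0$: choose an epi $Q\twoheadrightarrow B$ with $Q\in\mathcal{P}(\mathcal{B})$, realise $Q$ as a summand of some $F(P)$ with $P\in\mathcal{P}(\mathcal{A})$ via the hypothesis, and let $\beta\colon F(P)\twoheadrightarrow B$ be the composite; the hom-bijection sends $\varphi\beta$ to $G(\varphi)\circ G(\beta)\circ\eta_P=0$, so $\varphi\beta=0$, and then $\beta$ being epi gives $\varphi=0$. For the converse of (2), part (1) gives $\mathrm{add}\,F(\mathcal{P}(\mathcal{A}))\subseteq\mathcal{P}(\mathcal{B})$. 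The crux of the reverse inclusion is that the counit $\varepsilon$ is epi: if $C=\mathrm{coker}(\varepsilon_Y)$, then exactness of $G$ yields $G(C)=\mathrm{coker}(G(\varepsilon_Y))$, which vanishes because the triangle identity makes $G(\varepsilon_Y)$ split epi, and then faithfulness of $G$ forces $C=0$. For any $Q\in\mathcal{P}(\mathcal{B})$, take an epi $P\twoheadrightarrow G(Q)$ with $P\in\mathcal{P}(\mathcal{A})$; right exactness of the left adjoint $F$ yields a composite epi $F(P)\twoheadrightarrow FG(Q)\twoheadrightarrow Q$, which splits by projectivity of $Q$.

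For (3), let $i\colon K\hookrightarrow X$ be the kernel of $\eta_X$. Functoriality gives $F(\eta_X)\circ F(i)=F(\eta_X\circ i)=0$, and the triangle identity makes $F(\eta_X)$ split mono, so $F(i)=0$; faithfulness of $F$ then forces $i=0$, whence $K=0$. Conversely, if $\eta$ is mono and $F(f)=0$ for $f\colon X\to Y$, then naturality of $\eta$ gives $\eta_Y\circ f=GF(f)\circ\eta_X=0$, and $\eta_Y$ being mono yields $f=0$. The main obstacle I anticipate is the converse of (2): extracting surjectivity of $\varepsilon$ from faithfulness of $G$ requires precisely the trick of applying $G$ to the cokernel and invoking the triangle identity, which is conceptually the same move that drives part (3).
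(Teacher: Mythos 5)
Your proposal is correct and follows essentially the same route as the paper: the adjunction isomorphism $\mathrm{Hom}_\mathcal{B}(F(P),-)\cong\mathrm{Hom}_\mathcal{A}(P,G(-))$, the triangle identities to make $G(\varepsilon_Y)$ split epi and $F(\eta_X)$ split mono, and enough projectives to detect vanishing. Your small reformulations (passing to cokernels/kernels instead of saying that an exact faithful functor detects epis/monos, and arguing faithfulness of $G$ directly rather than contrapositively) are equivalent to the paper's steps.
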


\begin{proof}
(1) For the ``if" part, we assume that $G$ is exact. The adjoint pair implies a natural isomorphism
\begin{align}\label{equ:adj}
{\rm Hom}_\mathcal{B}(F(P), -)\stackrel{\sim}{\longrightarrow} {\rm Hom}_\mathcal{A}(P, G(-))
\end{align}
for object $P$ in $\mathcal{A}$. If $P$ is projective, the functor on  the right hand side is exact. It follows that $F(P)$ is projective.

For the ``only if" part, we recall that the functor $G$, as a right adjoint,  is automatically left exact. Let $u\colon X\rightarrow Y$ be an epimorphism in $\mathcal{B}$. We claim that $G(u)$ is also epic. Then we are done.

Since $\mathcal{A}$ has enough projective objects, the claim is equivalent to the statement that ${\rm Hom}_\mathcal{A}(P, G(u))$ is surjective for each $P\in \mathcal{P}(A)$. By assumption, $F(P)$ is projective and thus ${\rm Hom}_\mathcal{B}(F(P), u)$ is surjective. Then the claim follows from the natural isomorphism (\ref{equ:adj}).

(2) For the ``if" part, we already have $F(\mathcal{P}(\mathcal{A}))\subseteq \mathcal{P}(\mathcal{B})$ by (1). Take any object $Q\in \mathcal{B}$ and consider the counit $\varepsilon_Q\colon FG(Q)\rightarrow Q$. We observe that $G(\varepsilon_Q)$ is epic by the identity
$$G(\varepsilon_Q)\circ \eta_{G(Q)}={\rm Id}_{G(Q)}.$$
 Since $G$ is exact and faithful, it detects epimorphisms. In particular, we infer that $\varepsilon_Q$ is epic.

Assume now that $Q$ is projective in $\mathcal{B}$. Take an epimorphism $v\colon P\rightarrow G(Q)$ with $P\in \mathcal{P}(A)$. Then $F(v)$ is epic by the right-exactness of $F$. The epimorphism $\varepsilon_Q\circ F(v)\colon F(P)\rightarrow Q$ splits. Consequently, $Q$ is isomorphic to a direct summand of $F(P)$, as required.

For the ``only if" part, by (1) it suffices to show the faithfulness of $G$. We take a nonzero morphism $w$ in $\mathcal{B}$. Since $\mathcal{B}$ has enough projective objects, there exists a projective object $Q$ satisfying ${\rm Hom}_\mathcal{B}(Q, w)\neq 0$. By the assumption, we may assume that $Q$ is a direct summand of $F(P)$ for some projective object $P$ in $\mathcal{A}$. Consequently, ${\rm Hom}_\mathcal{B}(F(P), w)\neq 0$. By the natural isomorphism (\ref{equ:adj}), we infer that ${\rm Hom}_\mathcal{A}(P, G(w))\neq 0$. In particular, the morphism $G(w)$ is nonzero.

(3) The ``only if" part follows by a dual argument in the first paragraph  of the proof of (2).

 For the ``if" part, we take a morphism $a\colon X\rightarrow Y$ in $\mathcal{A}$ satisfying $F(a)=0$. The naturalness of $\eta$ yields
$$\eta_Y\circ a=GF(a)\circ \eta_X.$$
It follows that $\eta_Y\circ a=0$. Since $\eta_Y$ is mono, it forces that $a=0$, proving the faithfulness of $F$.
\end{proof}

The notions of a Frobenius pair and a Frobenius functor are very classical; see \cite{Mor65, CGN99, CDM02}.

Let $F\colon \mathcal{C}\rightarrow \mathcal{D}$ and $G\colon \mathcal{D}\rightarrow \mathcal{C}$ be two additive functors. Assume that $\alpha\colon \mathcal{C}\rightarrow \mathcal{C}$ and $\beta\colon \mathcal{D}\rightarrow \mathcal{D}$ are two autoequivalences.

 We say that $(F, G)$ is a \emph{Frobenius pair} of type $(\alpha, \beta)$ between $\mathcal{C}$ and $\mathcal{D}$,  provided that both $(F, G)$ and $(G, \beta F\alpha)$ are adjoint pairs. We call the functor $F$ a \emph{Frobenius functor}, if it fits into a Frobenius pair $(F, G)$.  We observe that $G$ is also a Frobenius functor. In other words, Frobenius functors always appear in pairs. We mention that a  Frobenius pair of type $(\alpha, \beta)$ is called a $(\alpha, \beta^{-1})$-strongly adjoint pair in \cite{Mor65}.

By a \emph{classical Frobenius pair} $(F, G)$, we mean a Frobenius pair of type $({\rm Id}_\mathcal{C}, {\rm Id}_\mathcal{D})$. That is, both $(F, G)$ and $(G, F)$ are adjoint pairs.

The following observations on Frobenius pairs between abelian categories are well known.

\begin{cor}\label{cor:fr}
Let $(F, G)$ be a Frobenius pair of type $(\alpha, \beta)$ between $\mathcal{A}$ and $\mathcal{B}$. Then the following statements hold.
\begin{enumerate}
\item Both $F$ and $G$ are exact satisfying $F(\mathcal{P}(\mathcal{A}))\subseteq \mathcal{P}(\mathcal{B})$ and $G(\mathcal{P}(\mathcal{B}))\subseteq \mathcal{P}(\mathcal{A})$.
\item The functor $F$ is faithful if and only if ${\rm add}\; G(\mathcal{P}(\mathcal{B}))=\mathcal{P}(\mathcal{A})$, if and only if the unit $\eta\colon {\rm Id}_\mathcal{A}\rightarrow GF$ is mono.
    \item The functor $G$ is faithful if and only if ${\rm add}\; F(\mathcal{P}(\mathcal{A}))=\mathcal{P}(\mathcal{B})$, if and only if the counit $\varepsilon \colon FG\rightarrow {\rm Id}_\mathcal{B}$ is epic.
\end{enumerate}
\end{cor}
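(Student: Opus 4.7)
The plan is to reduce everything to Lemma~\ref{lem:adj} applied separately to each of the two adjoint pairs $(F,G)$ and $(G,\beta F\alpha)$ built into the Frobenius structure, exploiting throughout that $\alpha$ and $\beta$ are exact and faithful autoequivalences.

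For (1), I would first check that both $F$ and $G$ are exact. The functor $G$ is a right adjoint in $(F,G)$ and simultaneously a left adjoint in $(G,\beta F\alpha)$, so it preserves both finite limits and finite colimits and is therefore exact. For $F$: it is right exact as a left adjoint in $(F,G)$, while $\beta F\alpha$ is left exact as a right adjoint in $(G,\beta F\alpha)$; since $\alpha$ and $\beta$ are exact equivalences, $F$ is also left exact and hence exact. Lemma~\ref{lem:adj}(1) then yields $F(\mathcal{P}(\mathcal{A}))\subseteq\mathcal{P}(\mathcal{B})$ from the adjunction $(F,G)$ and $G(\mathcal{P}(\mathcal{B}))\subseteq\mathcal{P}(\mathcal{A})$ from the adjunction $(G,\beta F\alpha)$.

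Parts (2) and (3) are formally dual. For (2), I would apply Lemma~\ref{lem:adj}(2) to $(G,\beta F\alpha)$ to obtain that ${\rm add}\; G(\mathcal{P}(\mathcal{B}))=\mathcal{P}(\mathcal{A})$ iff $\beta F\alpha$ is exact and faithful; exactness is automatic by~(1), and faithfulness of $\beta F\alpha$ is equivalent to that of $F$ because $\alpha,\beta$ are equivalences. The equivalence with $\eta$ being mono then comes directly from Lemma~\ref{lem:adj}(3) applied to $(F,G)$, available since $F$ is exact. For (3), Lemma~\ref{lem:adj}(2) applied to $(F,G)$ gives ${\rm add}\; F(\mathcal{P}(\mathcal{A}))=\mathcal{P}(\mathcal{B})$ iff $G$ is faithful (exactness of $G$ being free). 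The remaining equivalence requires a counit-version of Lemma~\ref{lem:adj}(3), which I would supply directly: the triangle identity $G(\varepsilon_Q)\circ\eta_{G(Q)}=\mathrm{Id}_{G(Q)}$ forces $G(\varepsilon_Q)$ to be split epic, so exactness and faithfulness of $G$ (which together detect epimorphisms) force $\varepsilon_Q$ itself to be epic; conversely, if $\varepsilon$ is epic then naturality turns $G(w)=0$ into $w\circ\varepsilon_X=0$, whence $w=0$.

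The main obstacle is essentially bookkeeping rather than mathematics: one must pair the right hypothesis with the right adjunction and keep track of the twist by $\alpha,\beta$ when transferring exactness or faithfulness between $F$ and $\beta F\alpha$. The only statement not covered literally by Lemma~\ref{lem:adj} is the counit variant used in~(3), supplied by the short dual argument above.
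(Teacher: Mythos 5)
Your proof is correct and follows essentially the same route as the paper, which likewise just observes that exactness and faithfulness transfer between $F$ and $\beta F\alpha$ and then applies Lemma~\ref{lem:adj} to the two adjoint pairs $(F,G)$ and $(G,\beta F\alpha)$. Your explicit counit-epic argument for part~(3) is a welcome addition, since the paper leaves that dual variant of Lemma~\ref{lem:adj}(3) implicit.
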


\begin{proof} We observe that $F$ is faithful (\emph{resp}. left exact) if and only if so is $\beta F\alpha$. Then we apply Lemma \ref{lem:adj} to the adjoint pairs $(F, G)$ and $(G, \beta F\alpha)$.
\end{proof}

The following examples are our main concerns. For an arbitrary ring $R$, we denote by $R\mbox{-Mod}$ the abelian category of left $R$-modules. By ${\rm Hom}_R(-, -)$ we mean the Hom bifunctor on $R\mbox{-Mod}$. As usual, right $R$-modules are viewed as left $R^{\rm op}$-modules. For Frobenius extensions and Frobenius bimodules, we refer to \cite{Kad}.

\begin{exm}\label{exm:Fex}
{\rm Let $G$ be a group and $H\subseteq G$ be a subgroup of finite index. Let $R$ be a ring. Denote by $RG$ and $RH$ the group rings. We have the restriction functor ${\rm Res}\colon RG\mbox{-Mod}\rightarrow RH\mbox{-Mod}$, the induction functor ${\rm Ind}=RG\otimes_{RH}-$ and coinduction functor ${\rm Coind}={\rm Hom}_{RH}(RG, -)$ from $RH\mbox{-Mod}$ to $RG\mbox{-Mod}$. They form well-known adjoint pairs $({\rm Ind}, {\rm Res})$ and $({\rm Res}, {\rm Coind})$. The functors ${\rm Ind}$ and ${\rm Coind}$ are isomorphic. Then we have a classical Frobenius pair $({\rm Ind}, {\rm Res})$.

Indeed, the above example generalizes as follows. For a ring extension $\theta\colon R\rightarrow S$, we have the scalar-extension functor $S\otimes_R-\colon R\mbox{-Mod}\rightarrow S\mbox{-Mod}$ and the forgetful functor $U\colon S\mbox{-Mod}\rightarrow R\mbox{-Mod}$. Recall that  $\theta$ is a \emph{Frobenius extension},  provided that $S$ is finitely generated projective as a left $R$-module and that there is an isomorphism
$$S\simeq {\rm Hom}_R(S, R)$$
  of $S$-$R$-bimodules. We observe that the ring extension $\theta\colon R\rightarrow S$ is Frobenius if and only if  $(S\otimes_R-, U)$ is a classical Frobenius pair.

  For another well-known example, let $t\geq 2$ and $S=R[x]/(x^t)$ be the truncated polynomial extension. Then the natural embedding $\theta\colon R\rightarrow S$ is a Frobenius extension.}
\end{exm}

 Frobenius functors between module categories are determined by Frobenius bimodules. The following example generalizes the above one.

 \begin{exm}
 {\rm Let $S$ and $R$ be two rings. An $S$-$R$-bimodule $M={_SM_R}$ is called a \emph{Frobenius bimodule}, provided that both $_SM$ and $M_R$ are finitely generated projective such that there is an isomorphism of $R$-$S$-bimodules
 $${\rm Hom}_{S}(M, S)\simeq {\rm Hom}_{R^{\rm op}}(M, R).$$
 Denote the common $R$-$S$-bimodule by  $N$. Then $(M\otimes_R-, N\otimes_S-)$ is a classical Frobenius pair between $R\mbox{-Mod}$ and $S\mbox{-Mod}$. Indeed, any classical Frobenius pair between these two module categories is of this form; see \cite[Theorem~2.1]{CGN99}. We observe that a ring extension $\theta\colon R\rightarrow S$ is a Frobenius extension if and only if the natural $S$-$R$-bimodule ${_SS_R}$ is Frobenius.

 We mention that Frobenius bimodules arise naturally in stable equivalences of Morita type; see \cite{DM07, Xi08}.}
 \end{exm}

The following non-classical Frobenius pair plays a fundamental role in modern differential graded theory.

\begin{exm}\label{exm:dg}
{\rm Let $A=(\oplus_{p\in \mathbb{Z}} A^p, d_A)$ be a differential graded ring and $A^\sharp$ be its underlying $\mathbb{Z}$-graded ring. Recall that a left differential graded $A$-module $M=(\oplus_{p\in \mathbb{Z}}M^p, d_M)$ consists of a left $\mathbb{Z}$-graded $A$-module and a differential $d_M\colon M\rightarrow M$ of degree one subject to the following graded Leibniz rule
$$d_M(a.m)=d_A(a).m +(-1)^{p} a.d_M(m)$$
for each $a\in A^p$ and $m\in M$.

Denote by $A\mbox{-DGMod}$ the abelian category of left differential graded $A$-modules, where the morphisms respect the grading and differentials. Denote by $\Sigma(M)$ the translated differential graded module, which is given by $\Sigma(M)^p=M^{p+1}$, $d_{\Sigma(M)}=-d_M$ and $a_\circ m=(-1)^pa.m$ for $a\in A^p$; here, ``$_\circ$" denotes the $A$-action on $\Sigma(M)$ and ``$.$" the original $A$-action on $M$. Then we have the  translation functor $\Sigma$ on $A\mbox{-DGMod}$, which is an automorphism.

Denote by $A^\sharp\mbox{-Gr}$ the abelian category of left $\mathbb{Z}$-graded $A^\sharp$-modules, whose morphisms respect the grading. Then we have the forgetful functor
$$U\colon A\mbox{-DGMod}\longrightarrow A^\sharp\mbox{-Gr}.$$

Each $\mathbb{Z}$-graded $A^\sharp$-module $X=\oplus_{p\in \mathbb{Z}} X^p$ gives rise to a differential graded $A$-module $F(X)$ as follows: \begin{center}
$F(X)^p=X^{p}\oplus X^{p-1}$, $d_{F(X)}\binom{x}{y}=\binom{0}{x}$ and $a.\binom{x}{y}=\binom{(-1)^qa.x+d_A(a).y}{a.y}$
\end{center}
for $a\in A^q$, $x\in X^{p}$ and $y\in X^{p-1}$. This defines a functor
$$F\colon A^\sharp\mbox{-Gr}\longrightarrow A\mbox{-DGMod}.$$
By \cite[Subsection~2.2]{Kel} we have adjoint pairs $(F, U)$ and $(U, \Sigma F)$. In other words, we have a Frobenius pair $(F,U)$ of type $({\rm Id}_{A^\sharp\mbox{-}{\rm Gr}}, \Sigma)$ between $A^\sharp\mbox{-Gr}$ and $A\mbox{-DGMod}$.

To be more specific, we assume that $R$ is an ordinary ring, which is viewed as a differential graded ring concentrated in degree zero. Then $R\mbox{-DGMod}$ coincides with the category $C(R\mbox{-Mod})$ of cochain complexes of $R$-modules, and $R^\sharp\mbox{-Gr}$ is isomorphic to the product $\prod_{\mathbb{Z}} R\mbox{-Mod}$ of countable copies of $R\mbox{-Mod}$. Consequently, we have a Frobenius pair $(F, U)$ between $C(R\mbox{-Mod})$ and $\prod_{\mathbb{Z}} R\mbox{-Mod}$.
}\end{exm}

\section{Frobenius pairs and Gorenstein projective objects}

In this section, we prove that a faithful Frobenius functor preserves the Gorenstein projective dimension of objects; see Theorem~\ref{thm:GP}. Consequently, it preserves and reflects Gorenstein projective objects.

Let $\mathcal{A}$ be an abelian category with enough projective objects. Recall that an acyclic complex $$P^\bullet = \cdots\longrightarrow P^{n+1}\longrightarrow P^{n}\longrightarrow P^{n-1}\longrightarrow\cdots$$
 of projective objects is said to be \emph{totally acyclic}, provided it remains acyclic after applying ${\rm Hom}_{\mathcal{A}}(-, Q)$ for any projective object $Q\in \mathcal{A}$. An object $M\in \mathcal{A}$ is \emph{Gorenstein projective} \cite{EJ00, Hol04} if there is  a totally acyclic complex $P^\bullet$  such that $M$ is isomorphic to its zeroth cocycle $Z^0(P^\bullet)$. Denote by $\mathcal{GP}(\mathcal{A})$ the full subcategory of $\mathcal{A}$ consisting of Gorenstein projective objects. We observe $\mathcal{P}(\mathcal{A})\subseteq \mathcal{GP}(\mathcal{A})$.

For an object $X$ in $\mathcal{A}$,  we denote by ${\rm pd}_\mathcal{A}(X)$ its projective dimension. The \emph{Gorenstein projective dimension} ${\rm Gpd}_\mathcal{A}(X)$ of $X$ is defined such that ${\rm Gpd}_\mathcal{A}(X)\leq n$ if and only if there is an exact sequence $0\rightarrow M^{-n}\rightarrow M^{-n+1} \rightarrow \cdots \rightarrow M^{0}\rightarrow X\rightarrow 0$ with each $M^{-i}\in \mathcal{GP}(\mathcal{A})$. By definition, ${\rm Gpd}_\mathcal{A}(X)=0$ if and only if $X\in \mathcal{GP}(\mathcal{A})$.

The following facts are standard. Denote by ${^\perp \mathcal{P}(\mathcal{A})}$ the full subcategory of $\mathcal{A}$ formed by those objects $Y$ satisfying ${\rm Ext}^i_\mathcal{A}(Y, P)=0$ for any $i\geq 1$ and $P\in \mathcal{P}(\mathcal{A})$.

\begin{lem}\label{lem:GP}
Keep the notation as above. Then the following statements hold.
\begin{enumerate}
\item The full subcategory $\mathcal{GP}(\mathcal{A})$ of $\mathcal{A}$ is closed under extensions, kernels of epimorphisms,  and direct summands.
\item An object $M$ is Gorenstein projective if and only if $M\in {^\perp \mathcal{P}(\mathcal{A})}$ and there is an exact sequence $0\rightarrow M \rightarrow P^0\rightarrow P^1 \rightarrow \cdots$ with each $P^i\in \mathcal{P}(\mathcal{A})$ and  each cocycle in ${^\perp\mathcal{P}(A)}$.
\item Assume that ${\rm Gpd}_\mathcal{A}(X)\leq n$. Then for each exact sequence $0\rightarrow K\rightarrow P^{1-n} \rightarrow \cdots \rightarrow P^{-1} \rightarrow P^0\rightarrow X\rightarrow 0$ with $P^{-i}\in \mathcal{P}(\mathcal{A})$, we have $K\in \mathcal{GP}(\mathcal{A})$.
\item  Assume that ${\rm pd}_\mathcal{A}(X)$ is finite. Then we have ${\rm Gpd}_\mathcal{A}(X)={\rm pd}_\mathcal{A}(X)$.
\end{enumerate}
\end{lem}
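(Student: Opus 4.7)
The plan is to prove (2) first, since it provides the working characterization that drives the rest. For the ``only if'' direction, if $M=Z^0(P^\bullet)$ for a totally acyclic $P^\bullet$, the left half of $P^\bullet$ is a projective resolution of $M$, so computing ${\rm Ext}^i_\mathcal{A}(M,Q)$ for projective $Q$ via the acyclicity of ${\rm Hom}_\mathcal{A}(P^\bullet,Q)$ yields $M\in{^\perp\mathcal{P}(\mathcal{A})}$, while the right half of $P^\bullet$ is the desired coresolution with cocycles being shifted zeroth cocycles, hence again Gorenstein projective and a fortiori in ${^\perp\mathcal{P}(\mathcal{A})}$. For the ``if'' direction, splice any projective resolution of $M$ with the given coresolution to form a doubly infinite acyclic complex of projectives with $Z^0=M$; applying ${\rm Hom}_\mathcal{A}(-,Q)$ for projective $Q$ and using the ${^\perp\mathcal{P}}$-condition on both the specified cocycles (right half) and the syzygies of $M$ (left half, via dimension-shifting from $M\in{^\perp\mathcal{P}(\mathcal{A})}$) yields total acyclicity.

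With (2) in hand, (1) proceeds as follows. Extension closure: the horseshoe lemma produces a projective coresolution of the middle term from those of the ends, and the long exact ${\rm Ext}$-sequence verifies that both the middle term and each new cocycle lie in ${^\perp\mathcal{P}(\mathcal{A})}$, so (2) applies. Kernel-of-epimorphism closure: from $0\to K\to M'\to M''\to 0$, take a coresolution step $0\to M'\to P\to Y\to 0$ for $M'$ and form the pushout along $M'\to M''$ to obtain $0\to K\to P\to Z\to 0$ with $Z$ an extension of $M''$ by $Y$; extension closure gives $Z\in\mathcal{GP}(\mathcal{A})$, and iterating constructs a coresolution of $K$ to which (2) applies. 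Direct-summand closure is the delicate point: given $G=M\oplus N\in\mathcal{GP}(\mathcal{A})$ with coresolution step $0\to G\to Q\to G^1\to 0$, the cokernel $M^1=Q/M$ fits in $0\to N\to M^1\to G^1\to 0$, so $M^1$ is an extension of $\mathcal{GP}$-objects only once one controls the corresponding property for $N$; I would handle this by a simultaneous induction on coresolutions of $M$ and $N$ maintaining the invariant ``each syzygy is a direct summand of a Gorenstein projective'', where the propagation step uses the already-established extension closure. I expect this bootstrap to be the main technical obstacle.

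Statement (3) follows by induction on $n$: comparing an $n$-step projective resolution of $X$ with a Gorenstein projective resolution of length $n$ (which exists by assumption) via a generalized Schanuel lemma gives a stable isomorphism of the two $n$-th kernels, and the closure properties of (1) then force $K\in\mathcal{GP}(\mathcal{A})$. For (4), the inequality ${\rm Gpd}_\mathcal{A}(X)\leq{\rm pd}_\mathcal{A}(X)$ is immediate from $\mathcal{P}(\mathcal{A})\subseteq\mathcal{GP}(\mathcal{A})$. For the reverse, set $g={\rm Gpd}_\mathcal{A}(X)$; by (3), truncating a finite projective resolution of $X$ at step $g$ yields $G\in\mathcal{GP}(\mathcal{A})$ with ${\rm pd}_\mathcal{A}(G)$ finite. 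It then suffices to show that any Gorenstein projective object of finite projective dimension is projective: first prove by induction on ${\rm pd}_\mathcal{A}(N)$ and the long exact ${\rm Ext}$-sequence that ${\rm Ext}^{i}_\mathcal{A}(G',N)=0$ for all $G'\in\mathcal{GP}(\mathcal{A})$, all $N$ of finite projective dimension, and all $i\geq 1$; then applying this vanishing to a short exact sequence $0\to G^{-1}\to P\to G\to 0$ extracted from a totally acyclic presentation of $G$ (so that ${\rm pd}_\mathcal{A}(G^{-1})$ is finite) forces the sequence to split and realizes $G$ as a summand of the projective $P$.
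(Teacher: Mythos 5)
The paper does not prove this lemma at all: it simply cites Holm (\cite[Theorems 2.5, 2.20, Propositions 2.3, 2.27]{Hol04}) for the four parts, so there is no in-paper argument to compare against. Your proposal is a correct reconstruction of the standard proofs behind those citations, and it is organized the right way: (2) first as the working criterion, then (1), (3), (4). Two remarks. First, the direct-summand step you flag as the main obstacle does close exactly along the lines you propose, and more easily than you fear: with $G=M\oplus N$ Gorenstein projective and $0\to G\to Q\to G^1\to 0$, set $M^1=Q/M$, so that $0\to N\to M^1\to G^1\to 0$ is exact; adding the trivial sequence $0\to M\to M\to 0\to 0$ gives $0\to G\to M^1\oplus M\to G^1\to 0$, whence $M^1\oplus M$ is Gorenstein projective by extension closure. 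Iterating (at the next stage the complement of $M^1$ is $M$, then $M^2$ has complement $M^1$, and so on) produces a coresolution of $M$ by projectives with all cocycles direct summands of Gorenstein projectives, hence in ${}^\perp\mathcal{P}(\mathcal{A})$, and (2) finishes. This is worth spelling out because it avoids the Eilenberg-swindle route Holm uses for modules, which needs countable coproducts and is not available in a general abelian category. Second, in (3) the phrase ``stable isomorphism of the two $n$-th kernels'' is slightly too strong: the pullback comparison of a projective resolution against a Gorenstein projective resolution yields short exact sequences such as $0\to X_1\to X_1'\oplus P^0\to G^0\to 0$ rather than direct-sum decompositions, since $\mathrm{Ext}^1_\mathcal{A}(G^0,X_1)$ need not vanish; but as you say, closure under kernels of epimorphisms and extensions from (1) is exactly what is needed to conclude, so the argument is sound once phrased that way. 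Part (4) is correct as written.
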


\begin{proof}
For (1), we refer to \cite[Theorem~2.5]{Hol04}, and for (2), we refer to \cite[Proposition~2.3]{Hol04}. For (3), we refer to \cite[Theorem~2.20]{Hol04}, and for (4), we refer to \cite[Proposition~2.27]{Hol04}.
\end{proof}

The main result of this paper is as follows. It strengthens and extends \cite[Proposition~4.5]{LX}.

\begin{thm}\label{thm:GP}
Let $F\colon \mathcal{A}\rightarrow \mathcal{B}$ be a faithful Frobenius functor between two abelian categories with enough projective objects. Then we have
$${\rm Gpd}_\mathcal{A}(X)={\rm Gpd}_\mathcal{B}(F(X))$$
 for each object $X\in \mathcal{A}$. In particular, $X\in \mathcal{GP}(\mathcal{A})$ if and only if  $F(X)\in \mathcal{GP}(\mathcal{B})$.
\end{thm}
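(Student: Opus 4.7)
The plan is to reduce the displayed equality to the statement that $F$ both preserves and reflects Gorenstein projective objects. By Lemma~\ref{lem:GP}(3), ${\rm Gpd}_\mathcal{A}(X)\le n$ if and only if the $n$-th syzygy in any projective resolution of $X$ is Gorenstein projective; since $F$ is exact and sends projectives to projectives by Corollary~\ref{cor:fr}, applying $F$ to a projective resolution of $X$ produces a projective resolution of $F(X)$ whose $n$-th syzygy is simply the image of the one for $X$. Thus the theorem will follow once I know that $X\in\mathcal{GP}(\mathcal{A})$ if and only if $F(X)\in\mathcal{GP}(\mathcal{B})$.

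For preservation, I would take $X\in\mathcal{GP}(\mathcal{A})$ with totally acyclic complex $P^\bullet$ satisfying $Z^0(P^\bullet)=X$. Exactness of $F$ and $F(\mathcal{P}(\mathcal{A}))\subseteq\mathcal{P}(\mathcal{B})$ make $F(P^\bullet)$ an acyclic complex of projectives with $Z^0=F(X)$; total acyclicity reduces, via the adjunction isomorphism
\[
{\rm Hom}_\mathcal{B}(F(P^\bullet),Q)\;\cong\;{\rm Hom}_\mathcal{A}(P^\bullet, G(Q)),
\]
to the acyclicity of the right-hand side, which holds for $Q\in\mathcal{P}(\mathcal{B})$ because $G(Q)\in\mathcal{P}(\mathcal{A})$.

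For reflection, suppose $F(X)\in\mathcal{GP}(\mathcal{B})$; I aim to verify the two clauses of Lemma~\ref{lem:GP}(2) for $X$. First, the same adjunction combined with the fact that $F$ sends projective resolutions to projective resolutions yields
\[
{\rm Ext}^i_\mathcal{A}(X, G(Q))\;\cong\;{\rm Ext}^i_\mathcal{B}(F(X),Q)\;=\;0
\]
for every $Q\in\mathcal{P}(\mathcal{B})$ and $i\ge 1$. Since $F$ is faithful, Corollary~\ref{cor:fr}(2) gives ${\rm add}\;G(\mathcal{P}(\mathcal{B}))=\mathcal{P}(\mathcal{A})$, and this forces $X\in{}^\perp\mathcal{P}(\mathcal{A})$. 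For the right projective coresolution, note that $G$ is itself a Frobenius functor, so the preservation step applied to $G$ gives $GF(X)\in\mathcal{GP}(\mathcal{A})$. I would then choose an embedding $\iota\colon GF(X)\hookrightarrow P$ into a projective with $\mathcal{GP}(\mathcal{A})$-cokernel (available from Lemma~\ref{lem:GP}(2) applied to $GF(X)$). Faithfulness of $F$ makes the unit $\eta_X\colon X\to GF(X)$ mono (Corollary~\ref{cor:fr}(2)), so the composite $\iota\circ\eta_X$ embeds $X$ into the projective $P$; let $X'$ be the cokernel. Applying $F$ produces $0\to F(X)\to F(P)\to F(X')\to 0$ with $F(P)$ projective and $F(X)\in\mathcal{GP}(\mathcal{B})$, and the standard fact that $\mathcal{GP}(\mathcal{B})$ is closed under cokernels of embeddings into projectives (a Schanuel-type consequence of Lemma~\ref{lem:GP}(1)) gives $F(X')\in\mathcal{GP}(\mathcal{B})$. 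So $X'$ satisfies the same hypotheses as $X$, and iterating yields the desired exact sequence $0\to X\to P^0\to P^1\to\cdots$ with projective terms and each cocycle satisfying the Ext-vanishing established above. Lemma~\ref{lem:GP}(2) then concludes $X\in\mathcal{GP}(\mathcal{A})$.

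The main obstacle will be constructing this right coresolution in the reflection step. The key device is the embedding $X\hookrightarrow GF(X)\hookrightarrow P$, which combines the faithfulness of $F$ (monomorphism of $\eta$) with the Frobenius symmetry (which supplies $GF(X)\in\mathcal{GP}(\mathcal{A})$, hence the outer embedding into a projective with Gorenstein cokernel). The recursive step then relies on the closure property of $\mathcal{GP}(\mathcal{B})$ noted above, which is what allows the construction to iterate and ultimately invoke the standard characterization in Lemma~\ref{lem:GP}(2).
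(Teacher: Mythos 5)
Your overall strategy mirrors the paper's: the inequality ${\rm Gpd}_\mathcal{A}(X)\geq {\rm Gpd}_\mathcal{B}(F(X))$ via total acyclicity and the adjunction (this is the paper's Lemma~\ref{lem:geq}), reflection of Gorenstein projectivity via a right coresolution built from the unit, and the final syzygy reduction via Lemma~\ref{lem:GP}(3). The one genuine gap is the step where you conclude $F(X')\in\mathcal{GP}(\mathcal{B})$ from the exact sequence $0\to F(X)\to F(P)\to F(X')\to 0$. The ``standard fact'' you invoke --- that $\mathcal{GP}(\mathcal{B})$ is closed under cokernels of embeddings into projectives --- is false: over $R=k[x]$ the sequence $0\to R\xrightarrow{\;x\;} R\to k\to 0$ has Gorenstein projective (even projective) kernel and projective middle term, yet the cokernel $k$ has projective dimension $1$ and hence is not Gorenstein projective by Lemma~\ref{lem:GP}(4). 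Lemma~\ref{lem:GP}(1) gives closure under kernels of epimorphisms, not under cokernels of monomorphisms into projectives, and no Schanuel-type manipulation rescues the general statement; one would at least need to know beforehand that $F(X')\in{^\perp\mathcal{P}(\mathcal{B})}$, which your argument has not yet established at that point.

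The conclusion you need is nevertheless true in your specific situation, but it must be extracted from the structure of your embedding rather than from a closure property. Since $\varepsilon_{F(X)}\circ F(\eta_X)={\rm id}_{F(X)}$, the morphism $F(\eta_X)$ is a split monomorphism, so ${\rm Cok}(F(\eta_X))$ is a direct summand of $FGF(X)$, which is Gorenstein projective (apply preservation along $G$ to get $GF(X)\in\mathcal{GP}(\mathcal{A})$, then preservation along $F$). The factorization $F(X)\to FGF(X)\to F(P)$ of your monomorphism yields an exact sequence $0\to{\rm Cok}(F(\eta_X))\to F(X')\to F({\rm Cok}(\iota))\to 0$ whose outer terms are Gorenstein projective, so closure under extensions (Lemma~\ref{lem:GP}(1)) gives $F(X')\in\mathcal{GP}(\mathcal{B})$. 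This is precisely the role played in the paper's proof of Proposition~\ref{prop:GP} by the commutative diagram producing the exact sequence $0\to FG(P)\to F(X')\oplus P\to Y\to 0$. With this repair your argument goes through and is essentially the paper's.
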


\begin{proof}
By Corollary~\ref{cor:fr}, the result follows directly from  Proposition~\ref{prop:GP} below.
\end{proof}

\begin{rem}
By categorical duality, there is a similar result for Gorenstein injective dimensions. Here, we assume that both $\mathcal{A}$ and $\mathcal{B}$ have enough injective objects. Denote by ${\rm Gid}$ the Gorenstein injective dimension. Then a faithful Frobenius functor $F\colon \mathcal{A}\rightarrow \mathcal{B}$ satisfies $${\rm Gid}_\mathcal{A}(X)={\rm Gid}_\mathcal{B}(F(X))$$
 for any object $X$ in $\mathcal{A}$.
\end{rem}

Recall that the \emph{global Gorenstein dimension} of $\mathcal{A}$ is defined as
$${\rm gl.Gdim}(\mathcal{A})={\rm sup}\{{\rm Gpd}_\mathcal{A}(X)\; |\; X\in \mathcal{A}\}.$$
It might be also called the global Gorentein projective dimension. The preference of the terminology is justified as follows: if $\mathcal{A}$ also has enough injective objects,  ${\rm gl.Gdim}(\mathcal{A})$ equals the supremum of the Gorenstein injective dimension of all objects in $\mathcal{A}$. We refer the details to \cite[Chapter~VII]{BR}; see also Appendix A.

\begin{cor}\label{cor:Ggd}
Let $(F, G)$ be a Frobenius pair between $\mathcal{A}$ and $\mathcal{B}$. Assume that both $F$ and $G$ are faithful. Then ${\rm gl.Gdim}(\mathcal{A})={\rm gl.Gdim}(\mathcal{B})$.
\end{cor}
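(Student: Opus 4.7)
The plan is to deduce the corollary directly from Theorem~\ref{thm:GP} by applying it in both directions, exploiting the symmetry of the Frobenius pair. Recall that by the discussion after the definition of a Frobenius pair, if $(F,G)$ is a Frobenius pair of type $(\alpha,\beta)$, then $G$ is automatically also a Frobenius functor (fitting into a Frobenius pair $(G, \beta F\alpha)$). So the hypothesis that both $F$ and $G$ are faithful places us in a position where Theorem~\ref{thm:GP} applies equally to $F\colon \mathcal{A}\to\mathcal{B}$ and to $G\colon \mathcal{B}\to\mathcal{A}$.

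First I would apply Theorem~\ref{thm:GP} to the faithful Frobenius functor $F$. For every object $X\in\mathcal{A}$ this gives
\[
{\rm Gpd}_\mathcal{A}(X) \;=\; {\rm Gpd}_\mathcal{B}(F(X)) \;\leq\; {\rm gl.Gdim}(\mathcal{B}).
\]
Taking the supremum over all $X\in\mathcal{A}$ yields ${\rm gl.Gdim}(\mathcal{A}) \leq {\rm gl.Gdim}(\mathcal{B})$.

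Symmetrically, I would apply Theorem~\ref{thm:GP} to the faithful Frobenius functor $G$. For every object $Y\in\mathcal{B}$ this gives
\[
{\rm Gpd}_\mathcal{B}(Y) \;=\; {\rm Gpd}_\mathcal{A}(G(Y)) \;\leq\; {\rm gl.Gdim}(\mathcal{A}).
\]
Taking the supremum over $Y\in\mathcal{B}$ yields ${\rm gl.Gdim}(\mathcal{B}) \leq {\rm gl.Gdim}(\mathcal{A})$. Combining the two inequalities gives the desired equality.

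There is essentially no obstacle here beyond invoking Theorem~\ref{thm:GP} twice; the only subtlety worth flagging is that we need $G$ itself to be a Frobenius functor in order to apply the theorem in the reverse direction, which is guaranteed by the fact that Frobenius functors always appear in pairs.
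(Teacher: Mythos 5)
Your proof is correct and follows exactly the paper's own argument: apply Theorem~\ref{thm:GP} to $F$ to get ${\rm gl.Gdim}(\mathcal{A})\leq {\rm gl.Gdim}(\mathcal{B})$, then use that $G$ is also a faithful Frobenius functor to obtain the reverse inequality. Nothing is missing.
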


\begin{proof}
By Theorem~\ref{thm:GP} we have  ${\rm gl.Gdim}(\mathcal{A})\leq \rm {gl.Gdim}(\mathcal{B})$. Since $G\colon \mathcal{B}\rightarrow \mathcal{A}$ is also a faithful Frobenius functor, we have ${\rm gl.Gdim}(\mathcal{B})\leq {\rm gl.Gdim}(\mathcal{A})$.
\end{proof}

\begin{rem}
We mention that the above faithful condition is necessary. Take another abelian category $\mathcal{B}'$ with enough projective objects. An object $(B, B')$ in the product category $\mathcal{B}\times \mathcal{B}'$ is Gorenstein projective if and only if so are both $B$ and $B'$.

Consider the canonical projection ${\rm Pr}\colon \mathcal{B}\times \mathcal{B}'\rightarrow \mathcal{B}$ and the inclusion ${\rm Inc}\colon \mathcal{B}\rightarrow \mathcal{B}\times \mathcal{B}'$. Then we have a classical Frobenius pair $({\rm Pr}, {\rm Inc})$. However, the functor ${\rm Pr}$ does not reflect Gorenstein projective objects in general. We observe that the global Goresntein dimension of $\mathcal{B}\times \mathcal{B}'$ is, in general, larger than the one of $\mathcal{B}$.
\end{rem}

\begin{lem}\label{lem:geq}
Let $F\colon \mathcal{A}\rightleftarrows \mathcal{B}\colon G$ be an adjoint pair consisting of exact functors. Assume that $G(\mathcal{P}(\mathcal{B}))\subseteq \mathcal{P}(\mathcal{A})$. Then we have ${\rm Gpd}_\mathcal{A}(X)\geq {\rm Gpd}_\mathcal{B}(F(X))$ for each object $X\in \mathcal{A}$.
\end{lem}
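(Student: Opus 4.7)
The plan is first to observe that the inequality is vacuous when ${\rm Gpd}_\mathcal{A}(X)=\infty$, so we may assume ${\rm Gpd}_\mathcal{A}(X)=n<\infty$. The strategy is to handle the base case $n=0$ (that is, show $F$ sends Gorenstein projective objects to Gorenstein projective objects), and then reduce the general case by applying $F$ to a resolution of length $n$ by Gorenstein projectives.

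For the base case, let $X\in\mathcal{GP}(\mathcal{A})$ and pick a totally acyclic complex $P^\bullet$ of projective objects of $\mathcal{A}$ with $Z^0(P^\bullet)\cong X$. Since $F$ is exact, $F(P^\bullet)$ is acyclic with $Z^0(F(P^\bullet))\cong F(X)$, and each $F(P^i)$ lies in $\mathcal{P}(\mathcal{B})$ by Lemma~\ref{lem:adj}(1) (here I use that $G$ is exact). It remains to verify total acyclicity, i.e.\ that ${\rm Hom}_\mathcal{B}(F(P^\bullet),Q)$ is acyclic for every $Q\in\mathcal{P}(\mathcal{B})$. The adjunction isomorphism (\ref{equ:adj}) identifies this complex with ${\rm Hom}_\mathcal{A}(P^\bullet,G(Q))$, and the hypothesis $G(\mathcal{P}(\mathcal{B}))\subseteq\mathcal{P}(\mathcal{A})$ gives $G(Q)\in\mathcal{P}(\mathcal{A})$, so the resulting complex is acyclic by total acyclicity of $P^\bullet$. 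Hence $F(X)\in\mathcal{GP}(\mathcal{B})$.

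For the general case with ${\rm Gpd}_\mathcal{A}(X)=n$, choose an exact sequence
\[
0\longrightarrow M^{-n}\longrightarrow M^{-n+1}\longrightarrow\cdots\longrightarrow M^0\longrightarrow X\longrightarrow 0
\]
with each $M^{-i}\in\mathcal{GP}(\mathcal{A})$, which exists by the definition of ${\rm Gpd}_\mathcal{A}$. Applying $F$ yields an exact sequence of the same length ending at $F(X)$, and each $F(M^{-i})$ lies in $\mathcal{GP}(\mathcal{B})$ by the base case. Therefore ${\rm Gpd}_\mathcal{B}(F(X))\leq n={\rm Gpd}_\mathcal{A}(X)$, as desired.

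The only step that requires any care is the verification of total acyclicity of $F(P^\bullet)$; this is the place where the hypothesis $G(\mathcal{P}(\mathcal{B}))\subseteq\mathcal{P}(\mathcal{A})$ is used in an essential way, transporting the problem across the adjunction so that the assumed total acyclicity of $P^\bullet$ in $\mathcal{A}$ can be applied. Everything else is formal: exactness of $F$, the elementary consequence $F(\mathcal{P}(\mathcal{A}))\subseteq\mathcal{P}(\mathcal{B})$ from Lemma~\ref{lem:adj}(1), and the definition of Gorenstein projective dimension.
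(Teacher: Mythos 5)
Your proof is correct and follows essentially the same route as the paper: reduce to showing $F$ preserves Gorenstein projective objects via exactness of $F$, then transport total acyclicity of $F(P^\bullet)$ across the adjunction using $G(\mathcal{P}(\mathcal{B}))\subseteq\mathcal{P}(\mathcal{A})$. The only difference is that you spell out the reduction step explicitly, which the paper dispatches with the single phrase ``since $F$ is exact, it suffices to\dots''.
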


\begin{proof}
Since $F$ is exact, it suffices to claim that $F(X)\in \mathcal{GP}(\mathcal{B})$ for any $X\in \mathcal{GP}(\mathcal{A})$. Take a totally acyclic complex $P^\bullet$ with $X\simeq Z^0(P^\bullet)$.
 By Lemma \ref{lem:adj}(1), the functor $F$ preserves projective objects. Hence, the acyclic complex $F(P^\bullet)$ consists of projective objects. For each projective object $Q\in \mathcal{B}$, we have an isomorphism
 $${\rm Hom}_\mathcal{B}(F(P^\bullet), Q)\stackrel{\sim}\longrightarrow  {\rm Hom}_\mathcal{A}(P^\bullet, G(Q))$$ of complexes. By assumption $G(Q)$ is projective in $\mathcal{A}$. It follows that these two Hom complexes are acyclic. In particular, the complex $F(P^\bullet)$ is totally acyclic. Since $F(X)\simeq Z^0(F(P^\bullet))$, we are done with the claim.
\end{proof}

The following result is slightly stronger than Theorem \ref{thm:GP}, although its assumptions seem to be technical.

\begin{prop}\label{prop:GP}
Let $F\colon \mathcal{A}\rightleftarrows \mathcal{B}\colon G$ be an adjoint pair consisting of exact functors. Assume that $F$ is faithful and that ${\rm add}\; G(\mathcal{P}(\mathcal{B}))=\mathcal{P}(\mathcal{A})$.
Then we have ${\rm Gpd}_\mathcal{A}(X)= {\rm Gpd}_\mathcal{B}(F(X))$ for each object $X\in \mathcal{A}$.
\end{prop}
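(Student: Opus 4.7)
The inequality ${\rm Gpd}_\mathcal{A}(X) \geq {\rm Gpd}_\mathcal{B}(F(X))$ is immediate from Lemma~\ref{lem:geq}. For the reverse, assume $n := {\rm Gpd}_\mathcal{B}(F(X))$ is finite (the infinite case is trivial), choose any partial projective resolution $0 \to K \to P^{1-n} \to \cdots \to P^0 \to X \to 0$ in $\mathcal{A}$, and apply $F$, which is exact and preserves projectives by Lemma~\ref{lem:adj}(1). Lemma~\ref{lem:GP}(3) applied in $\mathcal{B}$ gives $F(K) \in \mathcal{GP}(\mathcal{B})$. The whole statement thus reduces to the implication: if $F(K) \in \mathcal{GP}(\mathcal{B})$, then $K \in \mathcal{GP}(\mathcal{A})$. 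By Lemma~\ref{lem:GP}(2) this amounts to verifying (i) $K \in {^\perp\mathcal{P}(\mathcal{A})}$, and (ii) the existence of an exact sequence $0 \to K \to P^0 \to P^1 \to \cdots$ with each $P^i \in \mathcal{P}(\mathcal{A})$ and every cocycle in ${^\perp\mathcal{P}(\mathcal{A})}$.

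The tool driving both parts is the adjunction Ext-transfer ${\rm Ext}^i_\mathcal{A}(Y, G(Z)) \cong {\rm Ext}^i_\mathcal{B}(F(Y), Z)$, available because $F$ sends a projective resolution of $Y$ in $\mathcal{A}$ to a projective resolution of $F(Y)$ in $\mathcal{B}$. Combined with ${\rm add}\; G(\mathcal{P}(\mathcal{B})) = \mathcal{P}(\mathcal{A})$, it transfers Ext-vanishing in both directions and in particular delivers (i) at once from $F(K) \in {^\perp\mathcal{P}(\mathcal{B})}$. For (ii) I build the coresolution inductively, maintaining the joint invariant that each $K^m$ satisfies $K^m \in {^\perp\mathcal{P}(\mathcal{A})}$ together with $F(K^m) \in \mathcal{GP}(\mathcal{B})$. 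At stage $m$ I choose $0 \to F(K^m) \xrightarrow{\iota} Q^m \to L^m \to 0$ in $\mathcal{B}$ with $Q^m \in \mathcal{P}(\mathcal{B})$ and $L^m \in \mathcal{GP}(\mathcal{B})$, set $P^m := G(Q^m) \in \mathcal{P}(\mathcal{A})$, and compose the unit $\eta_{K^m}$ (monic by Lemma~\ref{lem:adj}(3), since $F$ is exact and faithful) with $G(\iota)$ to obtain a monomorphism $K^m \hookrightarrow P^m$; write $K^{m+1}$ for its cokernel.

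The heart of the argument, and the step I expect to be the main obstacle, is propagating both invariants to $K^{m+1}$. For $K^{m+1} \in {^\perp\mathcal{P}(\mathcal{A})}$: dimension-shifting along $0 \to K^m \to P^m \to K^{m+1} \to 0$ reduces ${\rm Ext}^i_\mathcal{A}(K^{m+1}, G(Q)) = 0$ (for $Q \in \mathcal{P}(\mathcal{B})$, $i \geq 1$) to the single surjectivity ${\rm Hom}_\mathcal{B}(FG(Q^m), Q) \twoheadrightarrow {\rm Hom}_\mathcal{B}(F(K^m), Q)$ (after transporting along the Ext-isomorphism). Given $g \colon F(K^m) \to Q$, the hypothesis $L^m \in {^\perp\mathcal{P}(\mathcal{B})}$ extends $g$ along $\iota$ to some $\tilde g \colon Q^m \to Q$, and then $\tilde g \circ \varepsilon_{Q^m} \colon FG(Q^m) \to Q$ restricts to $g$ by naturality of $\varepsilon$ and the triangle identity $\varepsilon_{F(K^m)} \circ F(\eta_{K^m}) = {\rm id}$. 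The add hypothesis upgrades the resulting $G(\mathcal{P}(\mathcal{B}))$-vanishing to full ${^\perp\mathcal{P}(\mathcal{A})}$-vanishing. For $F(K^{m+1}) \in \mathcal{GP}(\mathcal{B})$: the Ext-transfer first returns $F(K^{m+1}) \in {^\perp\mathcal{P}(\mathcal{B})}$; then I form the pushout $Y = Q^m \oplus_{F(K^m)} FG(Q^m)$, noting $FG(Q^m) = F(P^m) \in \mathcal{P}(\mathcal{B})$. The resulting sequence $0 \to FG(Q^m) \to Y \to L^m \to 0$ shows $Y \in \mathcal{GP}(\mathcal{B})$ by closure under extensions, while $0 \to Q^m \to Y \to F(K^{m+1}) \to 0$ splits (since $F(K^{m+1}) \in {^\perp\mathcal{P}(\mathcal{B})}$), exhibiting $F(K^{m+1})$ as a direct summand of $Y$ and hence in $\mathcal{GP}(\mathcal{B})$ by Lemma~\ref{lem:GP}(1). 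Iterating produces the required coresolution and completes the proof.
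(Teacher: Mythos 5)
Your proof is correct and follows essentially the same route as the paper's: reduce to the key implication $F(K)\in\mathcal{GP}(\mathcal{B})\Rightarrow K\in\mathcal{GP}(\mathcal{A})$ via Lemma~\ref{lem:geq} and Lemma~\ref{lem:GP}(3), then build a coresolution $0\to K\to G(Q^0)\to G(Q^1)\to\cdots$ inductively by embedding each cocycle into $G$ of a projective through the unit and a complete resolution of its image under $F$, transferring Ext-vanishing across the adjunction and the hypothesis ${\rm add}\, G(\mathcal{P}(\mathcal{B}))=\mathcal{P}(\mathcal{A})$. The only differences are organizational: the paper first shows $F(K^{m+1})$ is Gorenstein projective (it appears as a direct summand of the middle term of the exact sequence $0\to FG(Q^m)\to F(K^{m+1})\oplus Q^m\to L^m\to 0$ coming from the map of short exact sequences, so no splitting argument is needed) and then deduces $K^{m+1}\in{^\perp\mathcal{P}(\mathcal{A})}$, whereas you prove the latter directly by an explicit lifting via the triangle identity and then use the resulting Ext-vanishing to split your pushout sequence --- both versions are valid.
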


\begin{proof}
By Lemma \ref{lem:geq}, it suffices to show that ${\rm Gpd}_\mathcal{A}(X) \leq {\rm Gpd}_\mathcal{B}(F(X))$.  We first claim that if $F(X)\in \mathcal{GP}(\mathcal{B})$, then $X\in \mathcal{GP}(\mathcal{A})$.

For the claim, we take exact sequence
$$0\longrightarrow F(X)\stackrel{f'}\longrightarrow P\longrightarrow Y\longrightarrow 0$$ in $\mathcal{B}$  with $P\in \mathcal{P}(\mathcal{B})$ and $Y\in \mathcal{GP}(\mathcal{B})$. Denote by $\eta$ and $\varepsilon$ the unit and counit of the adjoint pair $(F, G)$, respectively. Set $f=G(f')\circ \eta_{X}\colon X\rightarrow G(P)$. By $f'= \varepsilon_{P}\circ F(f)$, we have that $F(f)$ is a monomorphism. Since $F$ is exact and faithful, we infer that $f$ is also a monomorphism. Consequently, we obtain an exact sequence in $\mathcal{A}$
$$0\longrightarrow X\stackrel{f}\longrightarrow G(P)\longrightarrow X'\longrightarrow 0.$$

The following commutative exact diagram
$$\xymatrix{
0\ar[r] &F(X)\ar[r]^{F(f)}\ar@{=}[d] & FG(P)\ar[r]\ar[d]^{\varepsilon_{P}} & F(X')\ar[r]\ar[d]^{} &0\\
0\ar[r] &F(X) \ar[r]^{f'} &P \ar[r] &Y\ar[r] & 0
}$$
yields an exact sequence
$$0\longrightarrow FG(P)\longrightarrow F(X')\oplus P \longrightarrow Y\longrightarrow 0.$$
By Lemma~\ref{lem:adj}(1), the functor $F$ preserves projective objects. It follows that $FG(P)$ is projective. Since $Y$ is Gorenstein projective, it follows from the above exact sequence and Lemma~\ref{lem:GP}(1) that $F(X')$ is Gorenstein projective. Hence, by Lemma~\ref{lem:GP}(2), we have ${\rm Ext}_{\mathcal{B}}^{i}(F(X'), Q)=0$ for any $Q\in \mathcal{P}(\mathcal{B})$ and $i> 0$. However, by the adjoint pair $(F, G)$, we infer that  ${\rm Ext}_{\mathcal{A}}^{i}(X', G(Q))=0$. By the assumption  that ${\rm add}\; G(\mathcal{P}(\mathcal{B}))=\mathcal{P}(\mathcal{A})$, we conclude that ${\rm Ext}_\mathcal{A}^{i}(X', P)=0$ for any $i>0$ and $P\in \mathcal{P}(\mathcal{A})$, that is, $X'\in {^{\perp}}\mathcal{P}(\mathcal{A})$.

Set $Q^0=G(P)$ and repeat the above argument for $X'$. We will obtain inductively  an exact sequence
$$0\longrightarrow X\longrightarrow Q^{0}\longrightarrow Q^{1}\longrightarrow Q^{2}\longrightarrow \cdots $$
with each $Q^i\in \mathcal{P}(\mathcal{A})$ and cocycle in ${^{\perp}}\mathcal{P}(\mathcal{A})$. In view of Lemma \ref{lem:GP}(2), we deduce the claim.

For the required inequality, we may assume that ${\rm Gpd}_\mathcal{B}(F(X))=n<\infty$. Take an exact sequence
$$\xi\colon 0\longrightarrow K\longrightarrow P^{1-n} \longrightarrow \cdots \longrightarrow P^0\longrightarrow X\longrightarrow 0$$
 with each $P^i\in \mathcal{P}(\mathcal{A})$. Applying the exact functor $F$ to $\xi$ and Lemma \ref{lem:GP}(3), we infer that $F(K)$ is Gorenstein projective. The above claim for $K$ implies that $K$ is Gorenstein projective, proving that ${\rm Gpd}_\mathcal{A}(X)\leq n$.
\end{proof}

We apply Theorem \ref{thm:GP} to recover a number of   known results.

\begin{exm}\label{exm:Fxe2}
{\rm Recall from Example~\ref{exm:Fex} that a Frobenius extension $\theta\colon R\rightarrow S$ yields a classical Frobenius pair $(S\otimes_R-, U)$ between $S\mbox{-Mod}$ and $R\mbox{-Mod}$. Applying Theorem~\ref{thm:GP} to the forgetful functor $U$, we infer that  a left $S$-module $_SM$ is Gorenstein projective if and only if the underlying $R$-module $_RM$ is Gorenstein projective.  This result is due to \cite[Theorem~2.2]{Ren2} and \cite[Theorem~3.2]{Zhao}.

In the Frobenius extension $\theta\colon R\rightarrow S$, we assume further that $S$ is a generator as a right $R$-module. Then $S\otimes_R-\colon R\mbox{-Mod}\rightarrow S\mbox{-Mod}$ is faithful. We apply Corollary~\ref{cor:Ggd} to obtain that $S$ is left $n$-Gorenstein if and only if so is $R$. This generalizes \cite[Theorem~3.3]{Ren2}.  Here, we use the fact that a ring $R$ is left $n$-Gorenstein if and only if ${\rm gl.Gdim}(R\mbox{-Mod})\leq n$.

 More concretely, let us take $S=RG$ for a finite group $G$. We infer the following classical result: an $RG$-module $_{RG}M$ is Gorenstein projective if and only if the underlying $R$-module $_RM$ is Gorenstein projective;  moreover, the group ring $RG$ is left $n$-Gorenstein if and only if so is $R$.

Let us mention that applying Theorem~\ref{thm:GP} to the truncated polynomial extension $R\rightarrow R[x]/(x^t)$, we might recover some results in \cite{Wei, XYY}; compare \cite[Section~3]{Ren1} and \cite{TH}.}

\end{exm}

\begin{exm}
{\rm Let $R$ be a ring. Consider the Frobenius pair $(F, U)$ between $C(R\mbox{-Mod})$ and $\prod_{\mathbb{Z}} R\mbox{-Mod}$ in Example~\ref{exm:dg}. We observe that a Gorenstein projective object in the product category $\prod_{\mathbb{Z}} R\mbox{-Mod}$ is precisely given by a sequence $(G^n)_{n\in \mathbb{Z}}$ of Gorenstein projective $R$-modules.

Applying Theorem~\ref{thm:GP} to the forgetful functor $U$, we infer that a complex $X^\bullet=(X^n, d_X^n)_{n\in \mathbb{Z}}$ is a Gorenstein projective object in $C(R\mbox{-Mod})$ if and only if each component $X^n$ is a Gorenstein projective $R$-module. This result is due to \cite[Theorem~1]{Yang11} and \cite[Theorem~2.2]{YL11}; see also \cite[Corollary~3.3]{Ren1}.
}\end{exm}

\section{Triangle equivalences induced by Frobenius functors}

In this section, we study when a Frobenius functor induces triangle equivalences on the stable categories of Gorenstein projective objects, the singularity categories and the Gorenstein defect categories, respectively.

Let $\mathcal{A}$ be an abelian category with enough projective objects. As $\mathcal{GP}(\mathcal{A})$ is closed under extensions, it is naturally an exact category in the sense of Quillen. Moreover, it is a Frobenius category, whose projective-injective objects are precisely projective objects in $\mathcal{A}$. Therefore, by the general result in \cite[I.2]{Hap88},  its stable category $\underline{\mathcal{GP}}(\mathcal{A})$ is  naturally a triangulated category.

Denote by $\mathbf{D}^b(\mathcal{A})$ the bounded derived category.   The bounded homotopy category $\mathbf{K}^b(\mathcal{P}(A))$ is naturally viewed as a triangulated subcategory of  $\mathbf{D}^b(\mathcal{A})$. We denote by $\mathbf{D}^b(\mathcal{A})_{\rm fGd}$ the full subcategory of $\mathbf{D}^b(\mathcal{A})$ formed by those complexes isomorphic to a bounded complex of Gorenstein projective objects. Then we have $\mathbf{K}^b(\mathcal{P}(A))\subseteq \mathbf{D}^b(\mathcal{A})_{\rm fGd}$.

Following \cite{Buc, Or04}, the \emph{singularity category} of $\mathcal{A}$ is defined to be the following Verdier quotient triangulated category
$$\mathbf{D}_{\rm sg}(\mathcal{A})= \mathbf{D}^b(\mathcal{A})/{\mathbf{K}^b(\mathcal{P}(A))}.$$
We observe that $\mathbf{D}_{\rm sg}(\mathcal{A})$ vanishes if and only if each object in $\mathcal{A}$ has finite projective dimension.

There is a canonical functor
$${\rm can}\colon \underline{\mathcal{GP}}(\mathcal{A})\longrightarrow \mathbf{D}_{\rm sg}(\mathcal{A})$$
sending a Gorenstein projective object to the corresponding stalk complex concentrated in degree zero. The functor is well defined, since projective objects vanish in  $\mathbf{D}_{\rm sg}(\mathcal{A})$. This canonical functor is a triangle functor in a natural way; see \cite[Lemma~2.5]{Chen11}.

The following fundamental result is due to \cite[Theorem~4.4.1]{Buc}; compare \cite[Corollary~4.13]{Bel00} and \cite[Proposition~1.21]{Or04}.

\begin{lem}\label{lem:buc}
The above canonical functor is fully faithful, and it induces a triangle equivalence
$$\underline{\mathcal{GP}}(\mathcal{A})\simeq {\mathbf{D}^b(\mathcal{A})_{\rm fGd}}/{\mathbf{K}^b(\mathcal{P}(A))}.$$
\end{lem}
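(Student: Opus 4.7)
The plan is to deduce Lemma~\ref{lem:buc} by combining a single Ext-vanishing computation with a length induction on bounded complexes of Gorenstein projective objects. I would establish the key computation first, then obtain faithfulness as an immediate consequence, then identify the essential image by induction, and finally address fullness, which I expect to be the main obstacle.

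\emph{Key computation.} By Lemma~\ref{lem:GP}(2), any $M\in\mathcal{GP}(\mathcal{A})$ satisfies ${\rm Ext}^i_\mathcal{A}(M,Q)=0$ for all $Q\in\mathcal{P}(\mathcal{A})$ and $i\geq 1$; trivially, also ${\rm Ext}^i_\mathcal{A}(Q,X)=0$ for any $X\in\mathcal{A}$, $i\geq 1$ and $Q\in\mathcal{P}(\mathcal{A})$. A length induction using stupid-truncation triangles of bounded complexes of projectives then yields, for any $P^\bullet\in\mathbf{K}^b(\mathcal{P}(\mathcal{A}))$, the natural isomorphisms
$${\rm Hom}_{\mathbf{D}^b(\mathcal{A})}(M,P^\bullet)\;\cong\;H^0\bigl({\rm Hom}_{\mathcal{A}}(M,P^\bullet)\bigr)\quad\text{and}\quad {\rm Hom}_{\mathbf{D}^b(\mathcal{A})}(P^\bullet,N)\;\cong\;H^0\bigl({\rm Hom}_{\mathcal{A}}(P^\bullet,N)\bigr).$$
Every such morphism is therefore represented by a chain map whose only nontrivial component is a morphism at $P^0$.

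\emph{Faithfulness and essential image.} Faithfulness is immediate: if $f\in{\rm Hom}_\mathcal{A}(M,N)$ between Gorenstein projectives vanishes in $\mathbf{D}_{\rm sg}(\mathcal{A})$, Verdier calculus factors it in $\mathbf{D}^b(\mathcal{A})$ as $M\xrightarrow{\alpha}P^\bullet\xrightarrow{\beta}N$ with $P^\bullet\in\mathbf{K}^b(\mathcal{P}(\mathcal{A}))$, and the above identifications represent $\alpha$ by $\bar\alpha\colon M\to P^0$ (with $d^0\bar\alpha=0$) and $\beta$ by $\bar\beta\colon P^0\to N$ (with $\bar\beta d^{-1}=0$); hence $f=\bar\beta\bar\alpha$ factors through the projective $P^0$ and vanishes in $\underline{\mathcal{GP}}(\mathcal{A})$. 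For the essential image, every object of $\mathbf{D}^b(\mathcal{A})_{\rm fGd}$ is represented by a bounded complex $G^\bullet$ of Gorenstein projectives; I would show, by induction on the length of $G^\bullet$, that $G^\bullet$ is $\mathbf{D}_{\rm sg}(\mathcal{A})$-isomorphic to a shift of the stalk of a single Gorenstein projective, using the stupid-truncation triangle
$$G^0\longrightarrow G^\bullet\longrightarrow\sigma^{\leq -1}G^\bullet\longrightarrow G^0[1]$$
in $\mathbf{D}^b(\mathcal{A})$ together with the identification $K\simeq G'[1]$ in $\mathbf{D}_{\rm sg}(\mathcal{A})$ attached to any short exact sequence $0\to G'\to Q\to K\to 0$ with $Q\in\mathcal{P}(\mathcal{A})$ and $G',K\in\mathcal{GP}(\mathcal{A})$, matching Happel's construction of cones in $\underline{\mathcal{GP}}(\mathcal{A})$ with Verdier cones in $\mathbf{D}_{\rm sg}(\mathcal{A})$.

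\emph{Fullness, the main obstacle.} A morphism $\phi\in{\rm Hom}_{\mathbf{D}_{\rm sg}(\mathcal{A})}(M,N)$ is represented by a roof $M\xleftarrow{s}Z\xrightarrow{t}N$ with $\mathrm{cone}(s)\in\mathbf{K}^b(\mathcal{P}(\mathcal{A}))$, and fullness amounts to normalizing the roof so that the left leg becomes ${\rm id}_M$. The obstruction to lifting ${\rm id}_M$ along $s$ in $\mathbf{D}^b(\mathcal{A})$ is the connecting morphism $g\colon M\to\mathrm{cone}(s)$; by the key computation, $g$ is represented by a chain map whose only nontrivial component is a morphism into the projective $\mathrm{cone}(s)^0$. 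Using this representation, one can enlarge $Z$ by a projective summand and readjust $s$ to kill the obstruction in the Verdier quotient, yielding a Verdier-equivalent roof with identity left leg and hence an honest morphism in $\underline{\mathcal{GP}}(\mathcal{A})$. I expect this normalization step to be the main obstacle: it requires a delicate interplay between the chain-map representative from the key computation and the roof-equivalence relation in the Verdier quotient, together with careful bookkeeping of how the enlargement interacts with both legs of the roof.
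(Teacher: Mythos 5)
Your key computation, the faithfulness argument, and the truncation induction for denseness are all sound, and they already go beyond what the paper does in-line: the paper's proof of this lemma consists of a citation to \cite[Theorem~2.1]{Chen11} for the full faithfulness, plus the one-line observation that ${\mathbf{D}^b(\mathcal{A})_{\rm fGd}}/{\mathbf{K}^b(\mathcal{P}(\mathcal{A}))}$ is the smallest triangulated subcategory of $\mathbf{D}_{\rm sg}(\mathcal{A})$ containing the Gorenstein projective stalks --- which is exactly the content of your stupid-truncation induction. So for faithfulness and denseness you have a correct, self-contained replacement for the paper's citation.

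The genuine gap is fullness, and it is not merely ``delicate bookkeeping''. Everything you have set up uses only one half of Gorenstein projectivity, namely $M\in{^\perp\mathcal{P}(\mathcal{A})}$: your key computation controls ${\rm Hom}_{\mathbf{D}^b(\mathcal{A})}(M,P^\bullet)$ and ${\rm Hom}_{\mathbf{D}^b(\mathcal{A})}(P^\bullet,N)$, but the obstruction to normalizing a roof is not governed by these groups. Writing the triangle as ${\rm cone}(s)[-1]\stackrel{h}\longrightarrow Z\stackrel{s}\longrightarrow M\stackrel{g}\longrightarrow{\rm cone}(s)$, the leg $t$ lifts along $s$ precisely when $t\circ h=0$ in ${\rm Hom}_{\mathbf{D}^b(\mathcal{A})}({\rm cone}(s)[-1],N)$, a group your computation says nothing about and which is nonzero in general; equivalently, if you insist on killing $g$ itself after refining the roof, you must make the chain map $\bar g\colon M\to {\rm cone}(s)^0$ null-homotopic in the enlarged cone, and producing that homotopy requires extending $\bar g$ along a monomorphism $M\hookrightarrow Q^0$ into a projective whose cokernel again lies in ${^\perp\mathcal{P}(\mathcal{A})}$. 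That is exactly the ${\rm Hom}(-,\mathcal{P}(\mathcal{A}))$-exact coresolution $0\to M\to Q^0\to Q^1\to\cdots$ from Lemma~\ref{lem:GP}(2), i.e.\ the other half of the definition of a Gorenstein projective object, and it appears nowhere in your proposal. The standard proofs (\cite[Theorem~4.4.1]{Buc}, \cite[Theorem~2.1]{Chen11}) use it essentially, by refining every roof through the comparison maps $\Omega^{-n}(M)[-n]\to M$ whose cones are the truncated coresolutions $Q^0\to\cdots\to Q^{n-1}$ and showing the obstruction dies for $n$ large. Merely ``enlarging $Z$ by a projective summand'' does not supply this: it changes ${\rm cone}(s)$ only up to projective direct summands and cannot kill a nonzero obstruction class. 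Until the coresolution is brought into the argument, the fullness step --- which you yourself flag as the main obstacle --- remains unproved.
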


\begin{proof}
For a detailed proof of the fully-faithfulness, we refer to \cite[Theorem~2.1]{Chen11}. The smallest triangulated subcategory of $\mathbf{D}_{\rm sg}(\mathcal{A})$ containing all Gorenstein projective objects is clearly ${\mathbf{D}^b(\mathcal{A})_{\rm fGd}}/{\mathbf{K}^b(\mathcal{P}(A))}$. Thus, the denseness follows immediately.
\end{proof}

Following \cite{BJO}, the \emph{Gorenstein defect category} of $\mathcal{A}$ is defined to be
$$\mathbf{D}_{\rm def}(\mathcal{A})=\mathbf{D}^b(\mathcal{A})/{\mathbf{D}^b(\mathcal{A})_{\rm fGd}}.$$
The terminology is justified by the following fact: $\mathbf{D}_{\rm def}(\mathcal{A})$ vanishes if and only if each object in $\mathcal{A}$ has finite Gorenstein projective dimension. In view of Lemma~\ref{lem:buc}, we have a short exact sequence of triangulated categories
$$ \underline{\mathcal{GP}}(\mathcal{A})\stackrel{\rm can}\longrightarrow \mathbf{D}_{\rm sg}(\mathcal{A})\longrightarrow \mathbf{D}_{\rm def}(\mathcal{A}),$$
where the unnamed arrow is the quotient functor.

Assume that  $F\colon \mathcal{A}\rightleftarrows \mathcal{B}\colon G$  is a Frobenius pair of type $(\alpha, \beta)$. Here, $\alpha$ and $\beta$ are autoequivalences on $\mathcal{A}$ and $\mathcal{B}$, respectively. The autoequivalence $\alpha$ induces triangle autoequivalences on $ \underline{\mathcal{GP}}(\mathcal{A})$, $\mathbf{D}_{\rm sg}(\mathcal{A})$ and $\mathbf{D}_{\rm def}(\mathcal{A})$, respectively. All the induced autoequivalences will still be denoted by $\alpha$. Similar remarks apply to $\beta$.

By Corollary~\ref{cor:fr}, both the exact functors $F$ and $G$ respect projective objects. By Lemma~\ref{lem:geq}, both functors respect Gorenstein projective objects. Consequently, we have the following commutative diagram
\[\xymatrix{
\underline{\mathcal{GP}}(\mathcal{A}) \ar@<-.7ex>[d]_-{F} \ar[rr]^-{\rm can} && \mathbf{D}_{\rm sg}(\mathcal{A}) \ar@<-.7ex>[d]_-{F} \ar[rr] && \mathbf{D}_{\rm def}(\mathcal{A}) \ar@<-.7ex>[d]_-{F}\\
\underline{\mathcal{GP}}(\mathcal{B}) \ar@<-.7ex>[u]_-{G} \ar[rr]^{\rm can} && \mathbf{D}_{\rm sg}(\mathcal{B}) \ar@<-.7ex>[u]_-{G} \ar[rr] && \mathbf{D}_{\rm def}(\mathcal{B})\ar@<-.7ex>[u]_-{G}
}\]
where the vertical induced functors form three adjoint pairs of triangle functors. They are all Frobenius pairs of type $(\alpha, \beta)$. Here, we use the same letter to denote the induced triangle functors.

The following results study when the induced adjoint pairs give rise to equivalences. Recall that  $\eta\colon {\rm Id}_\mathcal{A}\rightarrow GF$ and $\varepsilon\colon FG\rightarrow {\rm Id}_\mathcal{B}$ are the unit and counit of the adjoint pair $(F, G)$, respectively.

\begin{prop}\label{prop:tri-equ}
Let  $F\colon \mathcal{A}\rightleftarrows \mathcal{B}\colon G$ be  a Frobenius pair with both $F$ and $G$ faithful. Then the following statements hold.
\begin{enumerate}
\item The induced adjoint pair $F\colon \underline{\mathcal{GP}}(\mathcal{A})\rightleftarrows \underline{\mathcal{GP}}(\mathcal{B})\colon G$ yields mutually inverse equivalences if and only if ${\rm Cok}(\eta_X)$ has projective dimension at most one and ${\rm Ker}(\varepsilon_Y)$ is projective for any $X\in \mathcal{GP}(\mathcal{A})$ and $Y\in \mathcal{GP}(\mathcal{B})$;
\item  The induced adjoint pair $F\colon \mathbf{D}_{\rm sg}(\mathcal{A})\rightleftarrows \mathbf{D}_{\rm sg}(\mathcal{B})\colon G$ yields mutually inverse equivalences if and only if both ${\rm Cok}(\eta_X)$ and ${\rm Ker}(\varepsilon_Y)$ have finite projective dimension for any $X\in \mathcal{A}$ and $Y\in \mathcal{B}$;
\item  The induced adjoint pair $F\colon \mathbf{D}_{\rm def}(\mathcal{A})\rightleftarrows \mathbf{D}_{\rm def}(\mathcal{B})\colon G$ yields mutually inverse equivalences if and only if both ${\rm Cok}(\eta_X)$ and ${\rm Ker}(\varepsilon_Y)$ have finite Gorenstein projective dimension for any $X\in \mathcal{A}$ and $Y\in \mathcal{B}$.
\end{enumerate}
\end{prop}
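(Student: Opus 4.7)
The plan is to use the general fact that an adjoint pair of triangle functors is a pair of mutually inverse equivalences if and only if both its unit and counit are natural isomorphisms; so in each part I must determine when $\eta_X$ and $\varepsilon_Y$ become invertible after passing to the relevant triangulated quotient. Because $F$ and $G$ are faithful, Corollary~\ref{cor:fr}(2)(3) gives that $\eta_X$ is monic in $\mathcal{A}$ and $\varepsilon_Y$ is epic in $\mathcal{B}$, yielding short exact sequences
\begin{equation*}
0 \to X \xrightarrow{\eta_X} GF(X) \to C_X \to 0 \quad \text{and} \quad 0 \to K_Y \to FG(Y) \xrightarrow{\varepsilon_Y} Y \to 0,
\end{equation*}
where $C_X={\rm Cok}(\eta_X)$ and $K_Y={\rm Ker}(\varepsilon_Y)$. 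These induce distinguished triangles in $\mathbf{D}^b(\mathcal{A})$ and $\mathbf{D}^b(\mathcal{B})$, and hence in every Verdier quotient, so $\eta_X$ becomes an isomorphism in a given quotient precisely when the image of $C_X$ is zero there, and similarly for $\varepsilon_Y$ and $K_Y$.

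I will then invoke three standard vanishing criteria for the three quotients under consideration. First, an object of $\mathbf{D}^b(\mathcal{A})$ is zero in $\mathbf{D}_{\rm sg}(\mathcal{A})$ if and only if it has finite projective dimension. Second, it is zero in $\mathbf{D}_{\rm def}(\mathcal{A})$ if and only if it has finite Gorenstein projective dimension. Third, via the fully faithful canonical embedding of Lemma~\ref{lem:buc} combined with Lemma~\ref{lem:GP}(4), an object of $\mathcal{GP}(\mathcal{A})$ vanishes in $\underline{\mathcal{GP}}(\mathcal{A})$ if and only if it is projective. Applying these to $C_X$ and $K_Y$ yields parts (2) and (3) directly.

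Part (1) requires the extra observation that $GF$ and $FG$ preserve Gorenstein projective objects, which follows from Corollary~\ref{cor:fr}(1) together with Lemma~\ref{lem:geq} (applied to each of the adjoint pairs $(F,G)$ and $(G,\beta F\alpha)$). Hence for $X\in\mathcal{GP}(\mathcal{A})$ the first sequence above has Gorenstein projective endpoints, so the definition of Gorenstein projective dimension gives ${\rm Gpd}_\mathcal{A}(C_X)\le 1$; by Lemma~\ref{lem:GP}(4), $C_X$ then has finite projective dimension if and only if ${\rm pd}_\mathcal{A}(C_X)={\rm Gpd}_\mathcal{A}(C_X)\le 1$, which is exactly the bound appearing in the statement. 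Dually, Lemma~\ref{lem:GP}(1) ensures $K_Y\in\mathcal{GP}(\mathcal{B})$ without any hypothesis, so finite projective dimension for $K_Y$ degenerates via Lemma~\ref{lem:GP}(4) to $K_Y$ being projective.

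The main subtlety I expect is in part (1): because $\mathcal{GP}(\mathcal{A})$ is not closed under cokernels of monomorphisms, the short exact sequence for $\eta_X$ does not in general define a triangle inside $\underline{\mathcal{GP}}(\mathcal{A})$ itself, and one cannot test invertibility there naively. The workaround is to carry out the analysis inside $\mathbf{D}_{\rm sg}(\mathcal{A})$ via the fully faithful canonical functor of Lemma~\ref{lem:buc}, where the short exact sequence is a genuine triangle; Lemma~\ref{lem:GP}(4) then collapses the resulting finiteness condition into the sharp ``projective dimension at most one'' bound in the statement.
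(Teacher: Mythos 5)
Your proposal is correct and follows essentially the same route as the paper: reduce invertibility of the unit and counit in each quotient to the vanishing of ${\rm Cok}(\eta)$ and ${\rm Ker}(\varepsilon)$ there, and in part (1) transport the question to $\mathbf{D}_{\rm sg}(\mathcal{A})$ via the canonical embedding of Lemma~\ref{lem:buc} and collapse the Gorenstein bounds using Lemma~\ref{lem:GP}(1) and (4), exactly as the authors do. The only step you elide in parts (2) and (3) is that the unit and counit must be inverted on all bounded complexes rather than just stalk complexes; as in the paper, this reduces to the stated object-level condition because ${\rm Cok}(\eta_{X^\bullet})$ is computed componentwise (the functors are exact) and a bounded complex whose components all have finite (Gorenstein) projective dimension lies in $\mathbf{K}^b(\mathcal{P}(\mathcal{A}))$ (respectively $\mathbf{D}^b(\mathcal{A})_{\rm fGd}$).
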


\begin{proof}
By the faithfulness assumption and Corollary~\ref{cor:fr}, the unit $\eta$ is mono and the counit $\varepsilon$ is epic.

(1) For the statement, it suffices to study when precisely $\eta_X\colon X\rightarrow GF(X)$ and $\varepsilon_Y\colon FG(Y)\rightarrow Y$ become isomorphisms in the stable categories for any $X\in \mathcal{GP}(\mathcal{A})$ and $Y\in \mathcal{GP}(\mathcal{B})$.

 As $\eta_X$ is mono, we have a short exact sequence in $\mathcal{A}$
$$0\longrightarrow X\stackrel{\eta_X} \longrightarrow GF(X)\longrightarrow {\rm Cok}(\eta_X)\longrightarrow 0.$$
It induces an exact triangle in $\mathbf{D}_{\rm sg}(\mathcal{A})$.
$$X\stackrel{\eta_X} \longrightarrow GF(X)\longrightarrow {\rm Cok}(\eta_X)\longrightarrow \Sigma(X)$$
where $\Sigma$ is the suspension functor. Therefore, $\eta_X$ is an isomorphism in $\underline{\mathcal{GP}}(\mathcal{A})$, or equivalently via the canonical functor ``can", $\eta_X$ is an isomorphism in $\mathbf{D}_{\rm sg}(\mathcal{A})$ if and only if ${\rm Cok}(\eta_X)$ is zero in $\mathbf{D}_{\rm sg}(\mathcal{A})$. The latter condition means exactly that ${\rm Cok}(\eta_X)$ has finite projective dimension. As both $X$ and $FG(X)$ are Gorenstein projective, we have ${\rm Gpd}_\mathcal{A}({\rm Cok}(\eta_X))\leq 1$. By Lemma~\ref{lem:GP}(4), we infer ${\rm pd}_\mathcal{A}({\rm Cok}(\eta_X))\leq 1$. In summary, we have proved that $\eta_X$ is an isomorphism in $\underline{\mathcal{GP}}(\mathcal{A})$ if and only if ${\rm pd}_\mathcal{A}({\rm Cok}(\eta_X))\leq 1$.

Since $\varepsilon_Y$ is epic, we infer by Lemma~\ref{lem:GP}(1) that ${\rm Ker}(\varepsilon_Y)$ is Gorenstein projective, that is, ${\rm Gpd}_\mathcal{B}({\rm Ker}(\varepsilon_Y))=0$. Then the same argument as above will prove  that $\varepsilon_Y$ is an isomorphism in $\underline{\mathcal{GP}}(\mathcal{B})$ if and only if ${\rm pd}_\mathcal{B}({\rm Ker}(\varepsilon_Y))=0$.

(2) Similarly as above, we have to study when $\eta_{X^\bullet}$ and $\varepsilon_{Y^\bullet}$ becomes isomorphisms in the singularity categories for any bounded complex $X^\bullet$ in $\mathcal{A}$ and any bounded complex $Y^\bullet$ in $\mathcal{B}$.

The following  short exact sequence of bounded complexes
$$0\longrightarrow X^\bullet \stackrel{\eta_{X^\bullet}} \longrightarrow GF(X^\bullet)\longrightarrow {\rm Cok}(\eta_{X^\bullet})\longrightarrow 0$$
yields an exact triangle in $\mathbf{D}_{\rm sg}(\mathcal{A})$
$$X^\bullet \stackrel{\eta_{X^\bullet}} \longrightarrow GF(X^\bullet)\longrightarrow {\rm Cok}(\eta_{X^\bullet})\longrightarrow \Sigma(X^\bullet).$$
Then $\eta_{X^\bullet}$ is an isomorphism if and only if ${\rm Cok}(\eta_{X^\bullet})$ is zero in $\mathbf{D}_{\rm sg}(\mathcal{A})$. The latter condition means exactly that ${\rm Cok}(\eta_{X^\bullet})$, as an object in $\mathbf{D}^b(\mathcal{A})$, is isomorphic to a bounded complex of projective objects for any bounded complex $X^\bullet$. As each component of ${\rm Cok}(\eta_{X^\bullet})$ is given by ${\rm Cok}(\eta_{X^i})$, we infer that the previous condition is equivalent to the condition that for each object $X\in \mathcal{A}$,  ${\rm Cok}(\eta_{X})$ has finite projective dimension. In summary, we have proved that each $\eta_{X^\bullet}$ is an isomorphism in $\mathbf{D}_{\rm sg}(\mathcal{A})$ if and only if each ${\rm Cok}(\eta_{X})$ has finite projective dimension.

Similarly, we can prove that each $\varepsilon_{Y^\bullet}$ is an isomorphism in $\mathbf{D}_{\rm sg}(\mathcal{B})$ if and only if ${\rm Ker}(\eta_{Y})$ has finite projective dimension for each $Y\in \mathcal{B}$. This completes the proof of (2).

We omit the proof of (3), as it is very similar to the one of (2).
\end{proof}

Let us mention a particular case.

\begin{cor}\label{cor:triequi}
 Let $F\colon \mathcal{A}\rightleftarrows \mathcal{B}\colon G$  be a Frobenius pair with both $F$ and $G$ faithful. Assume that ${\rm Cok}(\eta_X)$ and ${\rm Ker}(\varepsilon_Y)$ are projective for any $X\in \mathcal{A}$ and $Y\in \mathcal{B}$. Then $F$ and $G$ induce the following triangle equivalences
 $$\underline{\mathcal{GP}}(\mathcal{A})\simeq \underline{\mathcal{GP}}(\mathcal{B}), \; \mathbf{D}_{\rm sg}(\mathcal{A})\simeq \mathbf{D}_{\rm sg}(\mathcal{B}), \mbox{and } \mathbf{D}_{\rm def}(\mathcal{A})\simeq \mathbf{D}_{\rm def}(\mathcal{B}). $$
\end{cor}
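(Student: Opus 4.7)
The plan is to deduce all three triangle equivalences as immediate consequences of Proposition~\ref{prop:tri-equ}, by verifying that the single uniform hypothesis---that ${\rm Cok}(\eta_X)$ and ${\rm Ker}(\varepsilon_Y)$ are projective for all $X\in\mathcal{A}$ and $Y\in\mathcal{B}$---implies each of the three pairs of conditions appearing in parts (1), (2), (3) of that proposition. The faithfulness of $F$ and $G$ carries over directly, so by Corollary~\ref{cor:fr} we still have $\eta$ mono and $\varepsilon$ epic, which are invoked inside Proposition~\ref{prop:tri-equ}.

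For (1), I would restrict attention to $X\in\mathcal{GP}(\mathcal{A})$ and $Y\in\mathcal{GP}(\mathcal{B})$: the assumed projectivity gives ${\rm pd}_\mathcal{A}({\rm Cok}(\eta_X))=0\leq 1$, and ${\rm Ker}(\varepsilon_Y)$ is already projective as required, so Proposition~\ref{prop:tri-equ}(1) yields $\underline{\mathcal{GP}}(\mathcal{A})\simeq\underline{\mathcal{GP}}(\mathcal{B})$. For (2), any projective object has projective dimension zero, hence finite, so Proposition~\ref{prop:tri-equ}(2) yields $\mathbf{D}_{\rm sg}(\mathcal{A})\simeq\mathbf{D}_{\rm sg}(\mathcal{B})$. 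For (3), using the inclusion $\mathcal{P}(\mathcal{A})\subseteq\mathcal{GP}(\mathcal{A})$ (and its analogue for $\mathcal{B}$), every projective object has Gorenstein projective dimension zero, hence finite, so Proposition~\ref{prop:tri-equ}(3) yields $\mathbf{D}_{\rm def}(\mathcal{A})\simeq\mathbf{D}_{\rm def}(\mathcal{B})$.

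There is essentially no obstacle here: the entire argument amounts to the observation that the uniform projectivity hypothesis is stronger, in each of the three senses demanded by Proposition~\ref{prop:tri-equ}, than the corresponding condition there. All the serious content---namely, the identification of when $\eta$ and $\varepsilon$ become invertible in each of the three triangulated quotients---has already been done in that proposition, so nothing beyond quoting it three times is required.
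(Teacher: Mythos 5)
Your proposal is correct and is precisely the argument the paper intends: the corollary is stated without proof as a "particular case," the point being exactly that the uniform projectivity of ${\rm Cok}(\eta_X)$ and ${\rm Ker}(\varepsilon_Y)$ implies each of the three conditions in Proposition~\ref{prop:tri-equ}. Your verification of all three implications is accurate, so nothing further is needed.
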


For a finite dimension algebra $A$ over a field $k$, we denote by $A\mbox{-mod}$ the abelian category of finite dimensional left $A$-modules. The corresponding categories $\underline{\mathcal{GP}}(A\mbox{-mod})$, $\mathbf{D}_{\rm sg}(A\mbox{-mod})$ and $\mathbf{D}_{\rm def}(A\mbox{-mod})$ are usually denoted by $A\mbox{-}\underline{\rm Gproj}$, $\mathbf{D}_{\rm sg}(A)$ and $\mathbf{D}_{\rm def}(A)$, respectively.

\begin{exm}
{\rm Let $A$ and $B$ be two finite dimensional algebras over a field $k$. By \cite[Proposition~2.2 and Corollary~3.1]{DM07}, under very mild conditions, a stable equivalence of Morita type is  a stable equivalence of adjoint type \cite{Xi08}. It  yields a classical Frobenius pair between $A\mbox{-mod}$ and $B\mbox{-mod}$, which satisfies the conditions in Corollary~\ref{cor:triequi}. So, we have three induced triangle equivalences
 $$A\mbox{-}\underline{\rm Gproj}\simeq B\mbox{-}\underline{\rm Gproj}, \; \mathbf{D}_{\rm sg}(A)\simeq \mathbf{D}_{\rm sg}(B), \mbox{and } \mathbf{D}_{\rm def}(A)\simeq \mathbf{D}_{\rm def}(B). $$
For the leftmost equivalence, one might compare \cite[Section~4]{LX}; for the middle singular equivalence, one might compare \cite[Corollary~3.6]{ZZ}.}
\end{exm}

\appendix

\section{The global  Gorenstein dimension of an abelian category}

We fix an arbitrary abelian category $\mathcal{A}$ which has enough projective objects and enough injective objects. We will give a new and direct proof to the following known result \cite{BR}: the supermum of the Gorenstein projective dimension of all objects in $\mathcal{A}$ coincides with the supermum of the Gorenstein injective dimension of all objects in $\mathcal{A}$. The common value is called the global Gorenstein dimension of $\mathcal{A}$.

The above mentioned result appears first in \cite[Proposition~VII.1.3]{BR}, while its proof is indirect and seems nontrivial to follow.  The result for the module category is rediscovered in \cite{BM10} via a completely different method; compare \cite[Theorem~4.1]{Emm12}.

We will abbreviate ${\rm Gpd}_\mathcal{A}$ and ${\rm Gid}_\mathcal{A}$ as ${\rm Gpd}$ and ${\rm Gid}$, respectively.  Let us start with a well-known fact.

\begin{lem}\label{lemA:fin-inj}
Assume that $M\in \mathcal{A}$ satisfies ${\rm id}(M)<\infty$. Then the following statements hold.
 \begin{enumerate}
 \item ${\rm Ext}^i_\mathcal{A}(G, M)=0$ for any Gorenstein projective object $G$ and  $i>0$;
 \item ${\rm Gpd}(M)={\rm pd}(M)$.
 \end{enumerate}
\end{lem}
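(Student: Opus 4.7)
The plan is to handle both parts by dimension-shift arguments, leveraging the coresolution characterisation of Gorenstein projectives (Lemma \ref{lem:GP}(2)) for part (1), and the truncation property of Gorenstein projective resolutions (Lemma \ref{lem:GP}(3)) together with Lemma \ref{lem:GP}(4) for part (2).

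For (1), I would invoke Lemma \ref{lem:GP}(2) to embed $G$ in a short exact sequence $0\to G\to P^0\to G^1\to 0$ with $P^0\in\mathcal{P}(\mathcal{A})$ and $G^1\in\mathcal{GP}(\mathcal{A})$. Since $P^0$ is projective, the long exact ${\rm Ext}$-sequence collapses to give ${\rm Ext}^i_\mathcal{A}(G,M)\cong{\rm Ext}^{i+1}_\mathcal{A}(G^1,M)$ for every $i\geq 1$. Iterating this shift $k$ times along a chosen Gorenstein projective coresolution yields ${\rm Ext}^i_\mathcal{A}(G,M)\cong{\rm Ext}^{i+k}_\mathcal{A}(G^k,M)$ with $G^k\in\mathcal{GP}(\mathcal{A})$, and choosing $k$ so large that $i+k>{\rm id}(M)$ forces the right-hand side to vanish.

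For (2), the inequality ${\rm Gpd}(M)\leq {\rm pd}(M)$ is automatic, while Lemma \ref{lem:GP}(4) supplies the reverse inequality as soon as ${\rm pd}(M)<\infty$ is known, so the real task is to verify the latter. I plan to assume ${\rm Gpd}(M)=n<\infty$ (otherwise both dimensions are infinite), set $N=\max(n,{\rm id}(M))$, and take an exact sequence $0\to K\to Q_{N-1}\to\cdots\to Q_0\to M\to 0$ with each $Q_j\in\mathcal{P}(\mathcal{A})$; by Lemma \ref{lem:GP}(3) (applied because $N\geq n$) the left-most term $K$ is Gorenstein projective. A dual dimension-shift, splicing this resolution into its $N$ constituent short exact sequences and using ${\rm Ext}^i_\mathcal{A}(Q_j,Y)=0$ for $i\geq 1$, delivers ${\rm Ext}^1_\mathcal{A}(K,Y)\cong{\rm Ext}^{N+1}_\mathcal{A}(M,Y)$ for every $Y\in\mathcal{A}$, which vanishes because $N+1>{\rm id}(M)$. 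Hence $K$ is projective and ${\rm pd}(M)\leq N<\infty$, as required.

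The main obstacle is simply bookkeeping the indices in the two dimension shifts and verifying that the choice $N\geq{\rm Gpd}(M)$ is exactly what is needed to invoke Lemma \ref{lem:GP}(3); conceptually there is no serious difficulty, since the ambient abelian category has enough projectives and the coresolutions in Lemma \ref{lem:GP}(2) are guaranteed by the Gorenstein projective hypothesis.
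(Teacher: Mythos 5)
Your argument for (1) is correct and is precisely the dimension-shift argument the paper alludes to: splice $G$ into short exact sequences $0\to G^k\to P^k\to G^{k+1}\to 0$ coming from a totally acyclic complex, obtain ${\rm Ext}^i_\mathcal{A}(G,M)\cong{\rm Ext}^{i+k}_\mathcal{A}(G^k,M)$, and kill the right-hand side once $i+k>{\rm id}(M)$. (Minor point: Lemma~\ref{lem:GP}(2) only puts the cocycles in ${}^\perp\mathcal{P}(\mathcal{A})$, but for the shift you need nothing about them beyond the existence of the exact sequence of projectives, so this is harmless.)

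Part (2), however, contains a genuine error at the decisive step. You correctly reduce to showing ${\rm pd}(M)<\infty$, and the syzygy $K$ in $0\to K\to Q_{N-1}\to\cdots\to Q_0\to M\to 0$ is indeed Gorenstein projective by Lemma~\ref{lem:GP}(3), with ${\rm Ext}^1_\mathcal{A}(K,Y)\cong{\rm Ext}^{N+1}_\mathcal{A}(M,Y)$ by dimension shifting. But the claim that this vanishes ``because $N+1>{\rm id}(M)$'' confuses the two variables of ${\rm Ext}$: finiteness of ${\rm id}(M)$ forces ${\rm Ext}^j_\mathcal{A}(-,M)=0$ for $j>{\rm id}(M)$, i.e.\ it controls $M$ in the \emph{second} argument, whereas here $M$ sits in the first. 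Asserting ${\rm Ext}^{N+1}_\mathcal{A}(M,Y)=0$ for all $Y$ is exactly the statement ${\rm pd}(M)\leq N$ that you are trying to prove, so the argument is circular. Nor is there an easy repair along these lines: to split $K$ off a projective via $0\to K\to P\to K'\to 0$ and part (1) you would need ${\rm id}(K)<\infty$, but the projectives $Q_j$ need not have finite injective dimension, so ${\rm id}(K)$ is not controlled by ${\rm id}(M)$. The paper avoids this by invoking the Auslander--Buchweitz approximation \cite[Theorem~1.1]{AB}: since ${\rm Gpd}(M)=n<\infty$, there is a short exact sequence $0\to M\to Q\to G\to 0$ with ${\rm pd}(Q)\leq n$ and $G$ Gorenstein projective; part (1) gives ${\rm Ext}^1_\mathcal{A}(G,M)=0$, so this sequence splits, $M$ is a direct summand of $Q$, and ${\rm pd}(M)\leq n$. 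Some such approximation, placing $M$ in the covariant (second) slot where its finite injective dimension can act, is the missing ingredient in your write-up.
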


\begin{proof}
The Ext-vanishing in (1) is well known, which follows by a dimension-shift argument. For (2), it suffices to prove ${\rm pd}(M)\leq {\rm Gpd}(M)$. We may assume that ${\rm Gpd}(M)=n<\infty$. By \cite[Theorem~1.1]{AB}, there is a short exact sequence
$$0\longrightarrow M \longrightarrow Q \longrightarrow G\longrightarrow 0$$
with ${\rm pd}(Q)\leq n$ and $G$ Gorenstein projective. This sequence splits due to the Ext-vanishing condition in (1). Then the required inequality follows.
\end{proof}

Denote by ${\rm spdi}(\mathcal{A})$ the supremum of projective dimension of all injective objects in $\mathcal{A}$, and by ${\rm sidp}(\mathcal{A})$ the supremum of injective dimension of all projective objects in $\mathcal{A}$.

The following fact is also well known.

\begin{lem}\label{lemA:finite}
Assume that both $\mathrm{spdi}(\mathcal{A})$ and $\mathrm{sidp}(\mathcal{A})$ are finite. Then we have $\mathrm{spdi}(\mathcal{A})= \mathrm{sidp}(\mathcal{A})$.
\end{lem}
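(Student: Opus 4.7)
My plan is to introduce the auxiliary integer
\[
r := \sup\{\, i \geq 0 \mid \mathrm{Ext}^{i}_{\mathcal{A}}(I, P) \neq 0 \text{ for some injective } I \text{ and projective } P\,\},
\]
set $n = \mathrm{spdi}(\mathcal{A})$ and $m = \mathrm{sidp}(\mathcal{A})$, and sandwich $\max\{n, m\} \leq r \leq \min\{n, m\}$. This will force $n = m$.

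The upper bound $r \leq \min\{n, m\}$ is immediate: for any injective $I$ and projective $P$, $\mathrm{Ext}^{i}(I, P) = 0$ whenever $i > \mathrm{pd}(I)$ or $i > \mathrm{id}(P)$, so $r \leq \min\{\mathrm{pd}(I), \mathrm{id}(P)\} \leq \min\{n, m\}$. I will also record that, since $n$ and $m$ are finite suprema of $\mathbb{N}$-valued invariants, they are attained: there exist an injective $I_0$ with $\mathrm{pd}(I_0) = n$ and a projective $P_0$ with $\mathrm{id}(P_0) = m$.

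The central step is the lower bound $r \geq n$; the dual inequality $r \geq m$ will follow by a symmetric argument. By definition of projective dimension I may choose an object $Y$ with $\mathrm{Ext}^{n}(I_0, Y) \neq 0$. Picking an epimorphism $Q \twoheadrightarrow Y$ from a projective $Q$, with kernel $K$, I then read off from the long exact sequence the fragment
\[
\mathrm{Ext}^{n}(I_0, Q) \longrightarrow \mathrm{Ext}^{n}(I_0, Y) \longrightarrow \mathrm{Ext}^{n+1}(I_0, K).
\]
Because $\mathrm{pd}(I_0) = n$, the right-hand group vanishes, so $\mathrm{Ext}^{n}(I_0, Q)$ surjects onto the nonzero group $\mathrm{Ext}^{n}(I_0, Y)$ and is therefore itself nonzero; this witnesses $r \geq n$ with $I_0$ injective and $Q$ projective. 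Dually, starting from some $X$ with $\mathrm{Ext}^{m}(X, P_0) \neq 0$, I embed $X \hookrightarrow J$ into an injective and use the vanishing $\mathrm{Ext}^{m+1}(J/X, P_0) = 0$ to conclude $\mathrm{Ext}^{m}(J, P_0) \neq 0$, giving $r \geq m$.

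The only genuinely nontrivial point is this ``universal Ext'' trick: an arbitrary witness $Y$ for $\mathrm{pd}(I_0) = n$ can always be replaced by a projective witness $Q$, and this works precisely because the $(n{+}1)$-st Ext out of $I_0$ into any coefficient object is already zero. This is the sole place where the two one-sided invariants $n$ and $m$ make contact, and I expect it to be the only step requiring any real thought; everything else is bookkeeping.
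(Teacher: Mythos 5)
Your proposal is correct, and its central step---transferring the non-vanishing of $\mathrm{Ext}^{m}_{\mathcal{A}}(X,P_0)$ to an injective object $J$ by embedding $X\hookrightarrow J$ and using $\mathrm{Ext}^{m+1}_{\mathcal{A}}(J/X,P_0)=0$---is exactly the trick in the paper's proof, which runs one direction by contradiction and gets the other by duality. Your packaging via the auxiliary invariant $r$ and the explicit dual step with a projective cover $Q\twoheadrightarrow Y$ is just a symmetric reorganization of the same argument.
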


\begin{proof}
  By duality, we may assume on the contrary that $\mathrm{spdi}(\mathcal{A})<\mathrm{sidp}(\mathcal{A})=m$. Take a projective object $P$ satisfying ${\rm id}(P)=m$. Hence, we have ${\rm Ext}_\mathcal{A}^m(X, P)\neq 0$ for some object $X$. Take a short exact sequence
 $$0\longrightarrow X\longrightarrow I\longrightarrow X'\longrightarrow 0$$
 with $I$ injective. By ${\rm Ext}^{m+1}_\mathcal{A}(X', P)=0$,  the induced map
 $${\rm Ext}_\mathcal{A}^m(I, P)\longrightarrow {\rm Ext}^m_\mathcal{A}(X, P)$$
 is surjective. Consequently, we have ${\rm Ext}_\mathcal{A}^m(I, P)\neq 0$. This implies that ${\rm pd}(I)\geq m$, contradicting to the inequality.
\end{proof}

\begin{lem}\label{lemA:Goren}
Assume that ${\rm sidp}(\mathcal{A})<\infty$. Suppose that there is an exact sequence $0\rightarrow M\rightarrow P^0\rightarrow P^1\rightarrow \cdots$ with each $P^i$ projective. Then $M$ is Gorenstein projective.
\end{lem}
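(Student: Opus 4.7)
The plan is to apply the characterization in Lemma~\ref{lem:GP}(2): it suffices to show that every cocycle of the given coresolution lies in ${}^{\perp}\mathcal{P}(\mathcal{A})$, since the coresolution is provided. This will be achieved by a standard dimension-shift argument, using the fact that the injective dimension of any projective is uniformly bounded.

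Concretely, set $d={\rm sidp}(\mathcal{A})<\infty$. Write $K^0=M$ and, for $n\geq 1$, let $K^n$ denote the $n$-th cocycle of the given exact sequence, so that we have short exact sequences
$$0\longrightarrow K^n\longrightarrow P^n\longrightarrow K^{n+1}\longrightarrow 0$$
for each $n\geq 0$. Fix an arbitrary projective object $Q\in \mathcal{A}$. By definition of $d$, we have ${\rm id}(Q)\leq d$, hence ${\rm Ext}_\mathcal{A}^j(-,Q)=0$ for all $j>d$.

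The key step is the iterated dimension shift. Since each $P^n$ is projective, the long exact Ext-sequence arising from the above short exact sequences gives, for every $i\geq 1$ and $n\geq 0$, an isomorphism ${\rm Ext}_\mathcal{A}^i(K^n,Q)\simeq {\rm Ext}_\mathcal{A}^{i+1}(K^{n+1},Q)$. Iterating this yields ${\rm Ext}_\mathcal{A}^i(K^n,Q)\simeq {\rm Ext}_\mathcal{A}^{i+k}(K^{n+k},Q)$ for every $k\geq 0$. Choosing $k$ with $i+k>d$ makes the right-hand side vanish, so ${\rm Ext}_\mathcal{A}^i(K^n,Q)=0$ for all $i\geq 1$, all $n\geq 0$, and all projective $Q$. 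In particular, $M=K^0$ and each cocycle $K^n$ belongs to ${}^{\perp}\mathcal{P}(\mathcal{A})$.

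Finally, Lemma~\ref{lem:GP}(2) applied to the exact sequence $0\to M\to P^0\to P^1\to \cdots$ together with the Ext-vanishing established above shows that $M$ is Gorenstein projective. There is essentially no obstacle here beyond correctly bookkeeping the dimension shift; the entire content of the lemma is that the uniform bound $d$ on injective dimensions of projectives lets us push the vanishing arbitrarily far to the right in the coresolution and then pull it back down to $M$.
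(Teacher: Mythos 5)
Your proof is correct and follows essentially the same route as the paper's: a dimension-shift along the cocycles of the coresolution, using the uniform bound ${\rm sidp}(\mathcal{A})<\infty$ to force ${\rm Ext}^i_\mathcal{A}(K^n,Q)=0$, followed by an appeal to Lemma~\ref{lem:GP}(2). You merely spell out the bookkeeping that the paper leaves implicit.
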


\begin{proof}
By assumption, each projective $P$ has finite injective dimension. Then by a dimension-shift argument, we have ${\rm Ext}_\mathcal{A}^i(M, P)=0$ for any $i\geq 1$. Due to the same reason, each kernel $K^j$ of $P^j\rightarrow P^{j+1}$ satisfies ${\rm Ext}_\mathcal{A}^i(K^j, P)=0$. By Lemma~\ref{lem:GP}(2), this implies that $M$ is Gorenstein projective.
\end{proof}

\begin{lem}\label{lemA:finite-G1}
 Assume that $\mathrm{sup} \{\mathrm{Gpd}(M) \; |\; M\in \mathcal{A} \} = n<\infty$. Then we have $\mathrm{spdi}(\mathcal{A})= \mathrm{sidp}(\mathcal{A}) \leq n$.
\end{lem}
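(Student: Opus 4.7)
The plan is to prove the two inequalities $\mathrm{spdi}(\mathcal{A})\leq n$ and $\mathrm{sidp}(\mathcal{A})\leq n$ separately, and then invoke Lemma~\ref{lemA:finite} to upgrade this to the claimed equality.

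For $\mathrm{spdi}(\mathcal{A})\leq n$: given any injective object $I$, we trivially have $\mathrm{id}(I)=0<\infty$, so Lemma~\ref{lemA:fin-inj}(2) yields $\mathrm{pd}(I)=\mathrm{Gpd}(I)$. Since the right-hand side is bounded by $n$ by hypothesis, we get $\mathrm{pd}(I)\leq n$. Taking the supremum over all injectives gives $\mathrm{spdi}(\mathcal{A})\leq n$.

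For $\mathrm{sidp}(\mathcal{A})\leq n$: fix a projective object $P$ and an arbitrary object $X\in\mathcal{A}$. I need to check $\mathrm{Ext}^i_{\mathcal{A}}(X,P)=0$ for all $i>n$. Since $\mathrm{Gpd}(X)\leq n$, the definition supplies an exact sequence
\[
0\longrightarrow M^{-n}\longrightarrow M^{-n+1}\longrightarrow\cdots\longrightarrow M^{0}\longrightarrow X\longrightarrow 0
\]
with each $M^{-j}\in\mathcal{GP}(\mathcal{A})$. By Lemma~\ref{lem:GP}(2), every Gorenstein projective object lies in ${}^{\perp}\mathcal{P}(\mathcal{A})$, so $\mathrm{Ext}^k_{\mathcal{A}}(M^{-j},P)=0$ for all $k\geq 1$. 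Splitting the resolution into short exact sequences at each syzygy and performing the standard dimension-shift, one obtains $\mathrm{Ext}^i_{\mathcal{A}}(X,P)\cong\mathrm{Ext}^{i-n}_{\mathcal{A}}(M^{-n},P)=0$ for $i\geq n+1$. Hence $\mathrm{id}(P)\leq n$, and taking the supremum over all projectives gives $\mathrm{sidp}(\mathcal{A})\leq n$.

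With both quantities finite (bounded by $n$), Lemma~\ref{lemA:finite} forces $\mathrm{spdi}(\mathcal{A})=\mathrm{sidp}(\mathcal{A})$, which completes the proof. I do not expect a serious obstacle here: the first inequality is a direct appeal to Lemma~\ref{lemA:fin-inj}(2), and the second is an elementary dimension shift whose only nontrivial input is the orthogonality $\mathcal{GP}(\mathcal{A})\subseteq{}^{\perp}\mathcal{P}(\mathcal{A})$ recorded in Lemma~\ref{lem:GP}(2). The mildest care is needed in bookkeeping the iterated connecting morphisms along the $n$-step Gorenstein projective resolution, but since each intermediate $\mathrm{Ext}^{k}(M^{-j},P)$ vanishes for $k\geq 1$, the shift is automatic.
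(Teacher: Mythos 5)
Your proof is correct and follows essentially the same route as the paper: the bound $\mathrm{spdi}(\mathcal{A})\leq n$ via Lemma~\ref{lemA:fin-inj}(2), the bound $\mathrm{sidp}(\mathcal{A})\leq n$ via the Ext-orthogonality of Gorenstein projective objects against projectives, and Lemma~\ref{lemA:finite} for the final equality. The only cosmetic difference is that the paper applies the Ext-vanishing to a single cosyzygy $L$ in an injective coresolution of $P$ and splits the resulting short exact sequence, whereas you establish $\mathrm{Ext}^{i}_{\mathcal{A}}(X,P)=0$ for all $X$ and $i>n$ and then invoke the standard Ext-characterization of injective dimension; both reduce to the same dimension shift.
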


\begin{proof}
For any injective object $I$, we have  $\mathrm{pd}(I) = \mathrm{Gpd}(I)$ by Lemma~\ref{lemA:fin-inj}. Then $\mathrm{spdi}(\mathcal{A}) \leq n$ follows from the assumption.

For any projective object $P$,  take an exact sequence
$$0\longrightarrow P\longrightarrow I^0 \longrightarrow \cdots \longrightarrow I^{n-1}\longrightarrow I^{n} \longrightarrow L \longrightarrow 0$$
with each $I^i$ injective. By the assumption, we have ${\rm Gpd}(L)\leq n$. It follows that ${\rm Ext}_\mathcal{A}^{n+1}(L, P)=0$. By a dimension-shift, we have
$${\rm Ext}_\mathcal{A}^{n+1}(L, P)\simeq {\rm Ext}^1_\mathcal{A}(L, L')$$
where $L'$ is the image of $I^{n-1}\rightarrow I^n$. Therefore, the short exact sequence
$$0\longrightarrow L'\longrightarrow I^n\longrightarrow L\longrightarrow 0$$
splits, and thus $L'$ is injective. This proves that ${\rm id}(P)\leq n$ and then ${\rm sidp}(\mathcal{A})\leq n$.  Then we are done by Lemma~\ref{lemA:finite}.\end{proof}

The following proof seems to be  new, where the argument is similar to the one in \cite[Section~3]{Chen10}.

\begin{lem}\label{lemA:finite-G2}
 Assume that $\mathrm{spdi}(\mathcal{A})\leq m $ and $\mathrm{sidp}(\mathcal{A})\leq  m$ for some integer $m$. Then we have  $\mathrm{sup}\{\mathrm{Gpd}(M)\; |\; M\in \mathcal{A} \} \leq m$.
\end{lem}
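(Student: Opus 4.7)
The plan is to prove that for every object $M \in \mathcal{A}$, the $m$-th syzygy $K$ in a truncated projective resolution
\[
0 \to K \to P^{-m+1} \to \cdots \to P^0 \to M \to 0
\]
is Gorenstein projective; this then gives $\mathrm{Gpd}_\mathcal{A}(M) \leq m$ directly from the definition (see also Lemma~\ref{lem:GP}(3)). The two hypotheses $\mathrm{spdi}(\mathcal{A}) \leq m$ and $\mathrm{sidp}(\mathcal{A}) \leq m$ will enter in opposite roles. First, using $\mathrm{sidp}(\mathcal{A}) \leq m$, every projective $R$ satisfies ${\rm id}_\mathcal{A}(R) \leq m$, so ${\rm Ext}^{i+m}_\mathcal{A}(M, R) = 0$ for all $i \geq 1$, and iterated dimension-shifting along the truncated resolution yields ${\rm Ext}^i_\mathcal{A}(K, R) \cong {\rm Ext}^{i+m}_\mathcal{A}(M, R) = 0$, i.e.\ $K \in {^\perp\mathcal{P}(\mathcal{A})}$.

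The main step uses $\mathrm{spdi}(\mathcal{A}) \leq m$ to produce a Gorenstein projective approximation of $K$ with finite-projective-dimension defect, via a Cartan--Eilenberg totalization. Take an injective coresolution $0 \to K \to I^0 \to I^1 \to \cdots$; for each $j\geq 0$ choose a projective resolution of $I^j$ of length $\leq m$ (available by $\mathrm{spdi}(\mathcal{A}) \leq m$) and assemble them into a double complex $P^{\bullet,\bullet}$ with $P^{i,j}$ nonzero only for $-m \leq i \leq 0$ and $j \geq 0$, whose vertical differentials lift those of the injective coresolution. The total complex $T = {\rm Tot}(P^{\bullet,\bullet})$ is then a complex of projectives supported in degrees $\geq -m$ and quasi-isomorphic to $I^\bullet$, so its cohomology is concentrated in degree zero with value $K$. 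Setting $Z := \ker(T^0 \to T^1)$ and $B := {\rm image}(T^{-1} \to T^0)$ extracts three exact sequences
\[
0 \to T^{-m} \to \cdots \to T^{-1} \to B \to 0, \quad 0 \to B \to Z \to K \to 0, \quad 0 \to Z \to T^0 \to T^1 \to \cdots,
\]
from which ${\rm pd}_\mathcal{A}(B) \leq m-1$, and $Z \in \mathcal{GP}(\mathcal{A})$ by Lemma~\ref{lemA:Goren} (applicable since $\mathrm{sidp}(\mathcal{A}) < \infty$). The middle sequence then forces $\mathrm{Gpd}_\mathcal{A}(K) \leq \max({\rm pd}_\mathcal{A}(B) + 1,\, \mathrm{Gpd}_\mathcal{A}(Z)) \leq m$, so in particular $\mathrm{Gpd}_\mathcal{A}(K)$ is finite.

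To upgrade $K$ from finite Gorenstein projective dimension to genuinely Gorenstein projective, I would apply the Auslander--Buchweitz approximation \cite[Theorem~1.1]{AB} (already invoked in the proof of Lemma~\ref{lemA:fin-inj}) to obtain a short exact sequence $0 \to K \to Q \to G \to 0$ with $G \in \mathcal{GP}(\mathcal{A})$ and ${\rm pd}_\mathcal{A}(Q) \leq \mathrm{Gpd}_\mathcal{A}(K) \leq m$. The long exact ${\rm Ext}$-sequence, together with $K, G \in {^\perp\mathcal{P}(\mathcal{A})}$, forces $Q \in {^\perp\mathcal{P}(\mathcal{A})}$; but any object of finite projective dimension lying in ${^\perp\mathcal{P}(\mathcal{A})}$ must in fact be projective, by a short induction on projective dimension that splits off the top of a projective resolution using the top-degree ${\rm Ext}$-vanishing. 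Thus $Q$ is projective, the sequence $0 \to K \to Q \to G \to 0$ has both outer terms Gorenstein projective, and Lemma~\ref{lem:GP}(1) (closure of $\mathcal{GP}(\mathcal{A})$ under kernels of epimorphisms) yields $K \in \mathcal{GP}(\mathcal{A})$.

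The main obstacle is the Cartan--Eilenberg totalization in the second paragraph: the bookkeeping that guarantees the bound $-m \leq i \leq 0$ on the double complex (which is precisely where $\mathrm{spdi}(\mathcal{A}) \leq m$ enters) and the cohomology computation that extracts the three displayed short exact sequences must be set up carefully, and the fact that $Z$ is Gorenstein projective via Lemma~\ref{lemA:Goren} relies on the third of these sequences being unbounded to the right.
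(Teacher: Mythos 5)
Your overall strategy coincides with the paper's: take an injective coresolution of the object, resolve each injective term by projectives of length at most $m$ (this is where ${\rm spdi}(\mathcal{A})\leq m$ enters), totalize to get a complex of projectives concentrated in degrees $\geq -m$ and quasi-isomorphic to the object, and extract a short exact sequence $0\to B\to Z\to(\,\cdot\,)\to 0$ with ${\rm pd}_\mathcal{A}(B)\leq m-1$ and $Z$ Gorenstein projective via Lemma~\ref{lemA:Goren}. Your detour through the $m$-th syzygy $K$, the verification $K\in{}^\perp\mathcal{P}(\mathcal{A})$, and the final Auslander--Buchweitz upgrade are all correct but superfluous: the paper runs the totalization on $M$ itself and concludes ${\rm Gpd}(M)\leq m$ directly from the single sequence $0\to B^0\to Z^0\to M\to 0$.

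The genuine gap is in your second paragraph, and it is not the bookkeeping you flag at the end. If you choose, for each $I^j$, an arbitrary projective resolution of length $\leq m$ and lift each differential $\partial^j\colon I^j\to I^{j+1}$ to a chain map between the resolutions, then the composite of two consecutive lifts covers $\partial^{j+1}\circ\partial^j=0$ and is therefore null-homotopic, but it need not be zero. Hence $P^{\bullet,\bullet}$ is \emph{not} a double complex, the naive total differential does not square to zero, and ``${\rm Tot}(P^{\bullet,\bullet})$'' is not a complex at all; this is precisely the central difficulty of the lemma. The paper repairs it by constructing, for every $l\geq 2$, a higher homotopy $d_l$ of bidegree $(l,-l+1)$ satisfying $\sum_{i=0}^{l}d_i\circ d_{l-i}=0$, so that $P^{\bullet,\bullet}$ becomes a quasi-bicomplex in the sense of \cite{Chen10}; the total differential is then $\sum_{l\geq 0}d_l$ (a finite sum on each component precisely because the columns have length $\leq m$), and the quasi-isomorphism from the total complex to $I^\bullet$ is \cite[Proposition~3.4]{Chen10}. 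Nor can you escape by using a genuine Cartan--Eilenberg resolution: its columns are assembled from projective resolutions of the cocycles and coboundaries of $I^\bullet$, i.e.\ of cosyzygies of $M$, whose projective dimensions are not controlled by ${\rm spdi}(\mathcal{A})$, so the bound $-m\leq i\leq 0$ would be lost. Without one of these devices the construction, and with it the rest of your argument, does not go through.
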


\begin{proof}
Fix an arbitrary object $M$. Consider an injective resolution
$$0\longrightarrow M\longrightarrow I^0 \stackrel{\partial^0} \longrightarrow I^1\stackrel{\partial^1}\longrightarrow I^1\longrightarrow \cdots $$
of $M$. For each $i\geq 0$, we choose a projective resolution
$$\pi^i\colon P^{i, \bullet}\longrightarrow I^i$$
such that $P^{i, \bullet}=(P^{i, j}, d_0^{i, j})_{j\leq 0}$ satisfies $P^{i, j}=0$ for $j\leq -(m+1)$. Here, we use the assumption ${\rm spdi}(\mathcal{A})\leq m$. By Comparison Theorem, there exists a cochain map
$$d_h^{i, \bullet}\colon P^{i,\bullet}\longrightarrow P^{i+1, \bullet}$$
extending the differential $\partial^i$. Set $d_{1}^{i,j}=(-1)^{j}d_h^{i,j}$.

The bigraded objects $P^{\bullet, \bullet}$ are endowed with two endomorphisms $d_0$ of degree (0,1) and $d_1$ of degree (1,0), which satisfy $d_0\circ d_0 = 0$ and $d_1\circ d_0 + d_0\circ d_1 = 0$. However, $d_1\circ d_1$ is not necessarily zero. Therefore,  $P^{\bullet, \bullet}$ is not a bicomplex in general.

The cochain map
$$d_h^{i+1,\bullet}\circ d_h^{i,\bullet}\colon  P^{i,\bullet}\longrightarrow P^{i+2,\bullet}$$
extends the zero map $0=\partial^{i+1}\circ \partial^i\colon I^i\rightarrow I^{i+2}$. Therefore, it is   is homotopic to zero. Hence the homotopy maps give rise an endomorphism $d_2$ of $P^{\bullet, \bullet}$ with degree $(2, -1)$ which satisfying
 $$d_0\circ  d_2 + d_1\circ d_1 + d_2\circ d_0 = 0.$$
 It is routine to check that
$d_1\circ d_2 + d_2\circ d_1$ commutes with $d_0$. In other words,
$$d_1^{i+2, \bullet-1}\circ  d_2^{i, \bullet} + d_2^{i+1, \bullet}\circ d_1^{i, \bullet}\colon P^{i, \bullet}\longrightarrow P^{i+3, \bullet}(-1)$$
 is a cochain map, where $(-1)$ denote the degree-shift functor of complexes (that is, $(-1)$ does not change the sign of the differentials). However, any cochain map $ P^{i, \bullet}\rightarrow P^{i+3, \bullet}(-1)$ is necessarily homotopic to zero. The homotopy for the above cochain map yields an endomorphism $d_3$ of $P^{\bullet, \bullet}$ of degree $(3, -2)$ satisfying
 $$d_0\circ  d_3 + d_1\circ d_2 +d_2\circ d_1 + d_3\circ d_0 = 0.$$

We repeat the above process to construct for each $l\geq 0$ an endomorphism $d_l$ on $P^{\bullet, \bullet}$ with degree $(l, -l+1)$ such that $\sum_{i=0}^{l} d_i\circ  d_{l-i} = 0$. The bigraded objects $P^{\bullet, \bullet}$ together with such endomorphisms $d_l$ becomes a  \emph{quasi-bicomplex} in the sense of \cite[p.2725]{Chen10}.

We form the \emph{total complex} $Q^\bullet$ of the quasi-bicomplex $P^{\bullet, \bullet}$ as follows: each component $Q^s$ is given by a finite direct sum $\oplus_{i+j=s}P^{i, j}$; the differential $Q^s\rightarrow Q^{s+1}$ is given by $\sum_{l\geq 0} d_l$, more precisely, its restriction to $P^{i, j}$ is given by $\sum_{l\geq 0} d_l^{i, j}$, where we observe that $d_l^{i, j}=0$ whenever $l\geq j+m+2$. By \cite[Proposition~3.4]{Chen10}, the morphisms $\pi^i\colon P^{i, 0}\rightarrow I^i$ induce a quasi-isomorphism
$$\pi^\bullet \colon Q^\bullet\longrightarrow I^\bullet.$$
In other words, the total complex $Q^\bullet$ is quasi-isomorphic to $M$, viewed as a stalk complex concentrated in degree zero.

By the very definition, the total complex $Q^\bullet$ is of the form
$$0\longrightarrow Q^{-m} \longrightarrow Q^{-m-1} \longrightarrow \cdots \longrightarrow Q^0\longrightarrow Q^1\longrightarrow \cdots$$
where each $Q^s$ is projective. As it is quasi-isomorphic to $M$, we obtain a short exact sequence
$$0\longrightarrow B^0\longrightarrow Z^0\longrightarrow M\longrightarrow 0,$$
where $B^0$ is the image of $Q^{-1}\rightarrow Q^0$ and $Z^0$ is the kernel of $Q^{0}\rightarrow Q^1$. Moreover, we observe that ${\rm pd}(B^0)\leq m-1$, and that $Z^0$ is Gorenstein projective by the assumption ${\rm sidp}(\mathcal{A})\leq m$ and  Lemma~\ref{lemA:Goren}. Then the above short exact sequence implies that ${\rm Gpd}(M)\leq m$, as required.
\end{proof}

The main result is as follows, which slightly reformulates the one in  \cite[Proposition~VII.1.3]{BR}.

\begin{thm}\label{thmA:balance}
Let $\mathcal{A}$ be an abelian category with enough projective objects and enough injective objects. Then we have
$$\mathrm{sup}\{\mathrm{Gpd}(M)\;  | \; M\in \mathcal{A} \} ={\rm max}\{{\rm spdi}(\mathcal{A}), {\rm sidp}(\mathcal{A}) \} = \mathrm{sup}\{\mathrm{Gid}(M)\;  |\;  M\in \mathcal{A} \}.$$
\end{thm}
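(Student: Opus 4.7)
The plan is to show that the first supremum equals $s := \max\{\mathrm{spdi}(\mathcal{A}), \mathrm{sidp}(\mathcal{A})\}$; the third supremum then equals $s$ by categorical duality, since $s$ is visibly self-dual (it treats projectives and injectives symmetrically). So the task reduces to proving
\[
g := \sup\{\mathrm{Gpd}(M) \mid M \in \mathcal{A}\} \;=\; s.
\]

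To prove $g = s$, I would combine Lemmas~\ref{lemA:finite-G1} and~\ref{lemA:finite-G2} and split into cases on whether $g$ is finite. First, suppose $g < \infty$. Then Lemma~\ref{lemA:finite-G1} immediately gives $\mathrm{spdi}(\mathcal{A}) = \mathrm{sidp}(\mathcal{A}) \leq g$, i.e.\ $s \leq g < \infty$. Since now $\mathrm{spdi}(\mathcal{A}) \leq s$ and $\mathrm{sidp}(\mathcal{A}) \leq s$, Lemma~\ref{lemA:finite-G2} (applied with $m = s$) yields $g \leq s$. Hence $g = s$ in this case.

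Next, suppose $g = \infty$. I claim $s = \infty$ as well. Indeed, if $s$ were finite, say $s = m$, then both $\mathrm{spdi}(\mathcal{A}) \leq m$ and $\mathrm{sidp}(\mathcal{A}) \leq m$, and Lemma~\ref{lemA:finite-G2} would force $g \leq m < \infty$, a contradiction. Thus $s = \infty = g$, completing the equality in all cases.

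Finally, applying the above to the opposite abelian category $\mathcal{A}^{\mathrm{op}}$ (which also has enough projectives and enough injectives, with the two notions exchanged) turns $\mathrm{Gpd}$ into $\mathrm{Gid}$ while preserving the value $s$; this yields $\sup\{\mathrm{Gid}(M) \mid M \in \mathcal{A}\} = s$ and concludes the proof. The main obstacle is really packaged inside Lemma~\ref{lemA:finite-G2}, whose quasi-bicomplex construction does the heavy lifting; given that lemma and its dual (Lemma~\ref{lemA:finite-G1}), the theorem itself is a clean two-case argument with no further technical input required.
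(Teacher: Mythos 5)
Your proposal is correct and takes essentially the same route as the paper: one inequality comes from Lemma~\ref{lemA:finite-G1}, the reverse from Lemma~\ref{lemA:finite-G2}, and the Gorenstein injective statement follows by duality using the self-duality of $\max\{\mathrm{spdi}(\mathcal{A}),\mathrm{sidp}(\mathcal{A})\}$. Your explicit case split on whether the supremum is finite merely spells out the handling of the infinite case that the paper's terser proof leaves implicit.
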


This common value is call the \emph{global Gorenstein  dimension} of $\mathcal{A}$, denoted by ${\rm gl.Gdim}(\mathcal{A})$.

\begin{proof}
The inequality $\mathrm{sup}\{\mathrm{Gpd}(M)\;  | \; M\in \mathcal{A} \} \leq {\rm max}\{{\rm spdi}(\mathcal{A}), {\rm sidp}(\mathcal{A}) \}$ follows from Lemma~\ref{lemA:finite-G2}, while $ {\rm max}\{{\rm spdi}(\mathcal{A}), {\rm sidp}(\mathcal{A}) \}\leq \mathrm{sup}\{\mathrm{Gpd}(M)\;  | \; M\in \mathcal{A} \}$ follows from  Lemma~\ref{lemA:finite-G1}. This establishes the left equality. By duality, we have the right equality.
\end{proof}

Let $d\geq 0$. The abelian category $\mathcal{A}$ is \emph{$d$-Gorenstein} if ${\rm gl.Gdim}(\mathcal{A})\leq d$. In view of Theorem~\ref{thmA:balance} and Lemma~\ref{lemA:finite}, this is equivalent to $\mathrm{spdi}(\mathcal{A})= \mathrm{sidp}(\mathcal{A})\leq d$. The terminology is justified by the following fact: a ring $R$ is \emph{left $d$-Gorenstein} in the sense of \cite{Bel00} if and only if the abelian  category $R\mbox{-Mod}$ of left $R$-modules is $d$-Gorenstein; compare \cite[Theorem~6.9]{Bel00} and \cite[Definition~VII.2.5]{BR}.

In view of Lemma~\ref{lemA:finite}, we remind the following open question; see \cite[p.123]{BR}.
\vskip 5pt

\noindent {\bf Question.} \emph{Let $\mathcal{A}$ be an  abelian category with enough projective objects and enough injective objects. Does the equality $\mathrm{spdi}(\mathcal{A})= \mathrm{sidp}(\mathcal{A})$ hold always?}

\vskip 5pt

Let $A$ be an artin algebra over a commutative artinian ring. The famous \emph{Gorenstein symmetry conjecture} for $A$ states that the injective dimension of $A$ as the  left regular $A$-module  coincides with the injective dimension of $A$ as the right regular $A$-module.  We observe that the above question for the abelian category $A\mbox{-mod}$ of finitely generated left $A$-modules is equivalent to the Gorenstein symmetry conjecture for $A$.

\vskip 10pt

\noindent {\bf Acknowledgements.}\quad The authors thank Zhibin Zhao for helpful discussion; the second author thanks Professor Changchang Xi for informing him about the stable equivalence of adjoint type.  This work is supported by the National Natural Science Foundation of China (No.s 11671245, 11871125 and 11971449), Anhui Initiative in Quantum Information Technologies (AHY150200), and Natural Science Foundation of Chongqing (No. cstc2018jcyjAX0541).

After the authors put the first version of the paper on arXiv.org, Jiangsheng Hu kindly points out that Theorem~\ref{thm:GP} is partly obtained in a recent preprint \cite{HLLZ} via a slightly different proof; moreover, Theorem~\ref{thmA:balance} is generalized in \cite[Theorem~1.2]{LXG} to an abelian category with an admissible balanced pair in the sense of \cite{Chen13}. We are very grateful to him for the references and  helpful comments.

\bibliography{}

\vskip 10pt

 {\footnotesize \noindent Xiao-Wu Chen,\\
 Key Laboratory of Wu Wen-Tsun Mathematics, Chinese Academy of Sciences,\\
 School of Mathematical Sciences, University of Science and Technology of China, Hefei 230026, Anhui, PR China\\
 }

\vskip 5pt

{\footnotesize \noindent Wei Ren,\\
 School of Mathematical Sciences, Chongqing Normal University, Chongqing 401331, PR China\\
 }

\end{document}